\newtheorem{theorem}{Theorem}
\newtheorem{remark}[theorem]{Remark}
\newtheorem{proposition}[theorem]{Proposition}
\DeclareMathOperator*{\divergenz}{div}              %
\DeclareMathOperator*{\esssup}{ess ~sup}         %
\newcommand{\N}{\mathbb{N}}
\newcommand{\R}{\mathbb{R}}
\newcommand{\RN}{\mathbb{R}^N}
\newcommand{\Lp}[1]{L^{#1}(\Omega)}
\newcommand{\Lprand}[1]{L^{#1}(\partial\Omega)}
\newcommand{\Wp}[1]{W^{1,#1}(\Omega)}
\newcommand{\eps}{\varepsilon}
\newcommand{\ph}{\varphi}
\newcommand{\rand}{\partial\Omega}
\newcommand{\into}{\int_{\Omega}}
\newcommand{\Linf}{L^{\infty}(\Omega)}
\newcommand{\close}{\overline{\Omega}}
\renewcommand{\l}{\left}
\renewcommand{\r}{\right}
\numberwithin{theorem}{section}
\numberwithin{equation}{section}
\title[Global a priori bounds for weak solutions of elliptic systems]{Global a priori bounds for weak solutions of quasilinear elliptic systems with nonlinear boundary condition}
\author[G.\,Marino]{Greta Marino}
\address[G.\,Marino]{Technische Universit\"{a}t Chemnitz, Fakult\"at f\"ur Mathematik, Reichenhainer Stra\ss e 41, 09126 Chemnitz, Germany}
\email{greta.marino@mathematik.tu-chemnitz.de}
\author[P.\,Winkert]{Patrick Winkert}
\address[P.\,Winkert]{Technische Universit\"{a}t Berlin, Institut f\"{u}r Mathematik, Stra\ss e des 17.\,Juni 136, 10623 Berlin, Germany}
\email{winkert@math.tu-berlin.de}
\subjclass[2010]{35J57, 35J60, 35B45}
\keywords{Moser iteration, boundedness of solutions, a-priori bounds, elliptic systems, critical growth, coupled systems}
\begin{document}

\begin{abstract}
    In this paper we study quasilinear elliptic systems with nonlinear boundary condition with fully coupled perturbations even on the boundary. Under very general assumptions our main result says that each weak solution of such systems belongs to $L^\infty(\close)\times L^\infty(\close)$. The proof is based on Moser's iteration scheme. The results presented here can also be applied to elliptic systems with homogeneous Dirichlet boundary condition. 
\end{abstract}

\maketitle

\section{Introduction}

In this paper we study the boundedness of weak solutions of the following quasilinear elliptic system
\begin{equation}\label{problem}
    \begin{aligned}
	-\divergenz \mathcal A_1(x, u, \nabla u)
	&= \mathcal B_1(x, u,v, \nabla u, \nabla v) \qquad 
	&& \text{in } \Omega, \\
	-\divergenz \mathcal A_2(x, v, \nabla v)
	&= \mathcal B_2(x, u, v, \nabla u, \nabla v) \qquad 
	&& \text{in } \Omega, \\
	\mathcal A_1(x, u, \nabla u) \cdot \nu
	&= \mathcal C_1 (x, u, v) 
	&& \text{on } \rand,\\
	\mathcal A_2(x, v, \nabla v) \cdot \nu
	&= \mathcal C_2 (x, u, v) 
	&& \text{on } \rand,
    \end{aligned}
\end{equation}
where $\Omega \subset \R^N$ with $N>1$ is a bounded domain with Lipschitz boundary $\partial \Omega$, $ \nu(x) $ denotes the outer unit normal of $ \Omega$ at $ x \in \partial \Omega$ and the functions $ \mathcal A_i\colon \Omega \times \R   \times \R^N  \to \R^N$, $\mathcal B_i\colon \Omega \times \R \times \R \times \R^N \times \R^N \to \R$, and $ \mathcal{C}_i\colon \rand \times \R \times \R \to \R, i= 1,2 $,  satisfy suitable $(p,q)$-structure conditions with $1<p,q<\infty$.

The main goal of this paper is to prove the existence of a priori bounds for weak solutions of problem \eqref{problem} under very general conditions on the data. Indeed, the novelties of our work can be stated as follows:
\begin{enumerate}
        \item[(i)]
                Problem \eqref{problem} is fully coupled even with the gradient of the solutions and with a  nonlinear boundary condition.
        \item[(ii)]
                Critical growth is allowed even on the boundary.
\end{enumerate}
The proof of our result uses a modified version of Moser's iteration technique whose arguments are essentially based on the monographs of Dr{\'a}bek-Kufner-Nicolosi \cite{Drabek-Kufner-Nicolosi-1997} and Struwe \cite{Struwe-2008}. We extend with our work recent results of the authors \cite{Marino-Winkert-2019} from the case of a single equation to a system which is a difficult task to undertake. To the best of our knowledge, a priori bounds for problem \eqref{problem} under such weak conditions have not been published before and so our results extend several works in this direction.

Let us comment on some relevant references concerning a priori bounds for elliptic systems. In 1992, Cl\'{e}ment-de Figueiredo-Mitidieri \cite{Clement-de-Figueiredo-Mitidieri-1992} studied the semilinear elliptic system
\begin{align}\label{prob_1}
        \begin{cases}
                -\Delta u = f(v) \quad&\text{in }\Omega,\qquad  u=0 \quad\text{on }\partial \Omega,\\
                -\Delta v = g(u) \quad&\text{in }\Omega,\qquad  v=0 \quad\text{on }\partial \Omega,
        \end{cases}
\end{align}
where $f,g$ are smooth functions such that $\alpha,\beta \in (0,\infty)$ exist with
\begin{align*}
        \lim_{s\to 0} \frac{f(s)}{s^p}=\alpha\quad\text{and}\quad \lim_{s\to \infty} \frac{g(s)}{s^q}=\beta,
\end{align*}
where $1\leq p,q<\infty$ satisfy
\begin{align}\label{cond_1}
        \frac{1}{p+1}+\frac{1}{q+1} > \frac{N-2}{N} \quad\text{if }N \geq 3.
\end{align}
Condition \eqref{cond_1} is the crucial assumption in their proof of a priori bounds for weak solutions of \eqref{prob_1} and it can be shown that this condition is optimal. The proof uses the methods applied in the paper of de Figueiredo-Lions-Nussbaum \cite{de-Figueiredo-Lions-Nussbaum-1982} in which condition \eqref{cond_1} first appeared. Since both papers deal not only with a priori bounds but also with the existence of positive solutions, it is worth mentioning the pioneer work of Lions in \cite{Lions-1982} concerning the existence of positive solutions for semilinear elliptic equations. An extension of \cite{Clement-de-Figueiredo-Mitidieri-1992} was done by the same authors in \cite{Clement-de-Figueiredo-Mitidieri-1996} to problems of the form
\begin{align}\label{prob_2}
        \begin{cases}
                -\Delta u = f(x,u,v,Du,Dv) \quad&\text{in }\Omega,\qquad u=0 \quad\text{on }\partial \Omega,\\
                -\Delta v = g(x,u,v,Du,Dv) \quad&\text{in }\Omega,\qquad v=0 \quad\text{on }\partial \Omega,
        \end{cases}
\end{align}
where a priori $L^\infty$-estimates are established for positive solutions of \eqref{prob_2} via a method which combines Hardy-Sobolev-type inequalities and interpolation. In de Figueiredo-Yang \cite{de-Figueiredo-Yang-2001} a priori bounds for solutions of \eqref{prob_2} (without the gradient dependence on $f$ and $g$) are obtained via the so-called blow up method and the results are much more general than those in \cite{Clement-de-Figueiredo-Mitidieri-1996}.

In 2004, a new method for a priori estimates for solutions of semilinear elliptic systems of the form
\begin{align*}
        \begin{cases}
                -\Delta u = f(x,u,v) \quad&\text{in }\Omega,\qquad u=0 \quad\text{on }\partial \Omega,\\
                -\Delta v = g(x,u,v) \quad&\text{in }\Omega,\qquad v=0 \quad\text{on }\partial \Omega,
        \end{cases}
\end{align*}
was presented by Quittner-Souplet \cite{Quittner-Souplet-2004} which is based on a bootstrap argument. In addition, we refer to this work because it gives an overview about the different techniques concerning a priori estimates, see the Introduction of \cite{Quittner-Souplet-2004} and also the references. Concerning a priori estimates for very weak solutions with power nonlinearities we mention the work of Quittner \cite{Quittner-2014}.

A priori bounds and existence of positive solutions for strongly coupled $p$-Laplace systems have been established by Zou \cite{Zou-2006a} for systems given by
\begin{align*}
        \begin{cases}
                -\Delta_m u +u^av^b=0 \quad&\text{in }\Omega,\qquad u=0 \quad\text{on }\partial \Omega,\\
                -\Delta_m v +u^cv^d=0 \quad&\text{in }\Omega,\qquad v=0 \quad\text{on }\partial \Omega,
        \end{cases}
\end{align*}
where $\Delta_m u=\divergenz(|\nabla u|^{m-2}\nabla u)$ denotes the $m$-Laplacian.

In 2010, Bartsch-Dancer-Wang \cite{Bartsch-Dancer-Wang-2010} studied the local and global bifurcation structure of positive solutions of the system
\begin{align}\label{prob_5}
        \begin{cases}
                -\Delta u+u = \mu_1 u^3+\beta v^2u \quad&\text{in }\Omega,\qquad u=0 \quad\text{on }\partial \Omega,\\
                -\Delta v+v = \mu_2 v^3+\beta u^2v \quad&\text{in }\Omega,\qquad v=0 \quad\text{on }\partial \Omega,
        \end{cases}
\end{align}
of nonlinear Schr\"odinger  type equations. They developed a new Liouville type theorem for nonlinear elliptic systems which provides a priori bounds for solution branches of \eqref{prob_5}. Singular quasilinear elliptic systems in $\R^N$ have been recently studied by Marano-Marino-Moussaoui \cite{Marano-Marino-Moussaoui-2019} for $(p_1,p_2)$-Laplace systems given by
\begin{align}\label{prob_6}
        \begin{cases}
                -\Delta_{p_1}u= a_1(x) f(u,v) \quad&\text{in }\R^N,\\
                -\Delta_{p_2}v= a_2(x) g(u,v)\quad&\text{in }\R^N,\\
                u,v>0 & \text{in }\R^N,
        \end{cases}
\end{align}
where a version of Moser's iterations is applied in order to obtain $L^\infty$-bounds for solutions of \eqref{prob_6}, see also Marino \cite{Marino-2019}.

Finally, we refer to other works which are related to a priori bounds and existence of weak solutions of elliptic systems of type \eqref{problem}, see, for example, Angenent-Van der Vorst \cite{Angenent-Van-der-Vorst-2000}, Bahri-Lions \cite{Bahri-Lions-1992}, Choi \cite{Choi-2015}, Damascelli-Pardo \cite{Damascelli-Pardo-2018}, D'Ambrosio-Mitidieri \cite{DAmbrosio-Mitidieri-2018}, Ghergu-R\u{a}dulescu \cite{Ghergu-Radulescu-2008}, Hai \cite{Hai-2011}, Kelemen-Quittner \cite{Kelemen-Quittner-2010}, Kos\'{\i}rov\'{a}-Quittner \cite{Kosirova-Quittner-2011}, Mavinga-Pardo \cite{Mavinga-Pardo-2017}, Mingione \cite{Mingione-2003}, Mitidieri \cite{Mitidieri-1993}, Motreanu \cite{Motreanu-2018}, Motreanu-Moussaoui \cite{Motreanu-Moussaoui-2014a}, \cite{Motreanu-Moussaoui-2014b}, Papageorgiou-R\u{a}dulescu-Repov\v{s} \cite{Papageorgiou-Radulescu-Repovs-2019}, Peletier-Van der Vorst \cite{Peletier-Van-der-Vorst-1992}, Ramos \cite{Ramos-2009}, Souto \cite{Souto-1995}, Troy \cite{Troy-1981}, Zhang \cite{Zhang-1999}, Zhou-Zhang-Liu \cite{Zhou-Zhang-Liu-2012},    Zou \cite{Zou-2006b} and the references therein.

The paper is organized as follows. In Section \ref{sect2} we state the main preliminaries which will be used in the paper.  Section \ref{sect3} contains the main results of our work. First, we prove that any weak solution of \eqref{problem} belongs to $ L^r(\close) \times L^r(\close)$ for any finite $r$, see Theorem \ref{th1} and then, in the second part, we are able to show that each weak solution of \eqref{problem} is essentially bounded, that is, it belongs to $ L^{\infty}(\close) \times L^{\infty}(\close)$, see Theorem \ref{th2}. Furthermore, we will mention that our results can also be applied to problems with homogeneous Dirichlet condition, see Theorem \ref{th3}.

\section{Preliminaries}\label{sect2}

Throughout the paper we denote by $ \vert \cdot \vert $ the norm of $ \RN$ and $ \cdot $ stands for the inner product in $ \RN$. 
For $r \in [1,\infty)$ we denote  by $ L^r(\Omega), L^r(\Omega; \RN)$ and $ W^{1,r}(\Omega)$ the usual Lebesgue and Sobolev spaces endowed with the norms $ \Vert \cdot \Vert_r$ and $ \Vert \cdot \Vert_{1,r}$  given by
\begin{align*}
    \begin{split}
        \Vert u \Vert_r &= \l(\into \vert u \vert^r \,dx \r)^{\frac{1}{r}}, \quad \Vert \nabla u \Vert_r= \l(\into \vert \nabla u \vert^r \,dx \r)^{\frac{1}{r}}, \\
        \Vert u \Vert_{1,r}&= \l(\into \vert \nabla u \vert^r \,dx \r)^{\frac{1}{r}}+\l(\into \vert u \vert^r \,dx \r)^{\frac{1}{r}}.
    \end{split}
\end{align*}
For $ r= \infty$, the norm of $L^{\infty}(\Omega) $ is given by
\begin{align*}
    \Vert u \Vert_{\infty}= \esssup_{\Omega} \vert u \vert.
\end{align*}
By $\sigma$ we denote the $(N-1)$-dimensional Hausdorff (surface) measure and $ L^s(\rand)$, $ 1 \le s \le \infty$, stands for the Lebesgue space on the boundary with the norms 
\begin{align*}
        \Vert u \Vert_{s, \rand}= \l(\int_{\rand} \vert u \vert^s \,d\sigma \r)^{\frac{1}{s}} \quad (1 \le s< \infty), \qquad \Vert u \Vert_{\infty, \rand}= \esssup_{\rand} \vert u \vert.
\end{align*}
It is well known that the linear trace mapping $ \gamma\colon W^{1,r}(\Omega) \to L^{r_2}(\rand)$ is compact for every $r_2\in [1,r_*)$ and continuous for $r_2=r_*$, where $ r_*$ is the critical exponent of $r$ on the boundary given by
\begin{align}\label{critical1}
        r_*=
        \begin{cases}
            \frac{(N-1)r}{N-r} \qquad & \text{if } r< N, \\
            \text{any } m \in (1, \infty) & \text{if } r \ge N.
        \end{cases}
\end{align}
For simplification we will drop the usage of $\gamma$. Moreover, by the Sobolev embedding theorem, we know that there exists a linear map $ i\colon W^{1,r}(\Omega) \to L^{r_1}(\Omega)$ which is compact for every $r_1\in [1,r^*)$ and continuous for  $r_1=r^*$ where the critical exponent is given by
\begin{align}\label{critical}
        r^*=
        \begin{cases}
                \frac{Nr}{N-r} \qquad & \text{if } r< N, \\
            \text{any } m \in (1, \infty) & \text{if } r \ge N.
        \end{cases}
\end{align}
For $ a \in \R$, we set $ a^{\pm}:= \max\{\pm a, 0\}$ and for $ u \in W^{1,r}(\Omega) $ we define $ u^{\pm}(\cdot):= u(\cdot)^{\pm}$. It is clear  that
\begin{align}\label{pm}
        u^{\pm} \in W^{1,r}(\Omega), \quad \vert u \vert= u^+ + u^-, \quad u= u^+- u^-.
\end{align}
Moreover, $ \vert \cdot \vert $ stands for the Lebesgue measure on $ \RN$ and also for the Hausdorff surface measure and it will be clear from the context which one is used. If $s>1$, then $s':=\frac{s}{s-1}$ denotes its conjugate. 

The following propositions are needed in the proofs of our main results.

\begin{proposition}(\cite[Proposition 2.1]{Winkert-2014})\label{proposition_boundary_integral}
    Let $\Omega \subset \R^N$, $N>1,$ be a bounded domain with Lipschitz boundary $\rand$, let $1<p<\infty$, and let $\hat{q}$ be such that $p \leq \hat{q}<p_*$ with the critical exponent stated in \eqref{critical1} with $r=p$.
    Then, for every $\eps>0$, there exist constants $\tilde{c}_1>0$ and $\tilde{c}_2>0$ such that
    \begin{align*}
	\|u\|_{\hat{q},\rand}^p \leq \eps \|u\|_{1,p}^p+\tilde{c}_1\eps^{-\tilde{c}_2} \|u\|_{p}^p \qquad \text{for all }u \in W^{1,p}(\Omega).
    \end{align*}
\end{proposition}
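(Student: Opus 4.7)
My plan is to reduce the inequality for $\hat q \in (p, p_*)$ to the endpoint case $\hat q = p$ via log-convexity on $\rand$, and to extract the explicit $\eps$-dependence through Young's inequality.

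\textbf{Step 1 (the case $\hat q = p$).} I would fix a vector field $F \in W^{1,\infty}(\Omega;\R^N)$ with $F(x) \cdot \nu(x) \ge 1$ for $\sigma$-a.e.\ $x \in \rand$ — such an $F$ exists because $\Omega$ has Lipschitz boundary — and apply the divergence theorem to $|u|^p F$, first for $u \in C^\infty(\close)$ and then by density. Hölder's inequality on the resulting interior integrals yields
$$\|u\|_{p,\rand}^p \le C_0 \bigl( \|u\|_p^{p-1} \|\nabla u\|_p + \|u\|_p^p \bigr).$$
A first Young inequality with conjugate exponents $p$ and $p/(p-1)$ applied to the cross term then gives, for any $\delta>0$,
$$\|u\|_{p,\rand}^p \le \delta \|u\|_{1,p}^p + C\delta^{-1/(p-1)} \|u\|_p^p,$$
which is already the desired conclusion when $\hat q = p$.

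\textbf{Step 2 (general $\hat q$).} For $p < \hat q < p_*$, log-convexity of $L^r(\rand)$-norms in $1/r$ produces a unique $\theta \in (0,1)$ with $\tfrac{1}{\hat q} = \tfrac{\theta}{p} + \tfrac{1-\theta}{p_*}$ and
$$\|u\|_{\hat q,\rand} \le \|u\|_{p,\rand}^\theta \|u\|_{p_*,\rand}^{1-\theta}.$$
Combining with the continuous trace embedding $\|u\|_{p_*,\rand} \le C_1 \|u\|_{1,p}$ from \eqref{critical1}, raising to the $p$-th power and inserting the bound of Step 1 yields
$$\|u\|_{\hat q,\rand}^p \le C_2 \bigl( \delta \|u\|_{1,p}^p + C\delta^{-1/(p-1)} \|u\|_p^p \bigr)^{\theta} \|u\|_{1,p}^{(1-\theta)p}.$$
Expanding the $\theta$-power via $(a+b)^\theta \le a^\theta + b^\theta$ and applying a second Young inequality with exponents $1/\theta$ and $1/(1-\theta)$ to the mixed factor $\|u\|_p^{\theta p} \|u\|_{1,p}^{(1-\theta)p}$ splits the right-hand side into a multiple of $\|u\|_{1,p}^p$ plus a multiple of $\|u\|_p^p$. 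Calibrating $\delta = \delta(\eps)$ together with the second Young parameter so that the total coefficient of $\|u\|_{1,p}^p$ is exactly $\eps$ produces a coefficient in front of $\|u\|_p^p$ of the form $\tilde c_1 \eps^{-\tilde c_2}$.

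\textbf{Main obstacle.} The vector-field identity of Step 1 and the interpolation inequality of Step 2 are standard; the only delicate piece is the bookkeeping at the end of Step 2, where the two Young applications must be chained in the right order so that after the calibration the $\|u\|_{1,p}^p$-coefficient collapses precisely to $\eps$, while the $\|u\|_p^p$-coefficient is controlled by a concrete negative power of $\eps$ (rather than an uncontrolled function of $\eps$, as would come out of a plain Ehrling argument).
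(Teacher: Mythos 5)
Your argument is correct, and it is worth noting that this paper does not prove the proposition at all: it is quoted verbatim from \cite[Proposition 2.1]{Winkert-2014}, so the only fair comparison is with that reference, whose proof likewise runs through interpolation of the boundary norms ($\frac{1}{\hat q}=\frac{\theta}{p}+\frac{1-\theta}{p_*}$), the continuity of the trace $W^{1,p}(\Omega)\to L^{p_*}(\rand)$, and Young's inequality with a calibration of the parameters that yields the explicit factor $\tilde c_1\eps^{-\tilde c_2}$ rather than an Ehrling-type unquantified constant. What you add is a self-contained proof of the endpoint case $\hat q=p$: the transversal Lipschitz vector field $F$ with $F\cdot\nu\ge 1$ on $\rand$ and the divergence theorem applied to $|u|^pF$ give $\|u\|_{p,\rand}^p\le C_0\bigl(\|u\|_p^{p-1}\|\nabla u\|_p+\|u\|_p^p\bigr)$, which the cited source instead imports from the literature (Dr\'abek--Kufner--Nicolosi type trace inequalities); your Young-plus-Young bookkeeping in Step 2 then closes the argument correctly, since choosing $\delta$ and the second Young weight as explicit powers of $\eps$ makes the $\|u\|_{1,p}^p$-coefficient exactly $\eps$ and leaves a concrete negative power of $\eps$ in front of $\|u\|_p^p$. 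Two small points you should make explicit: when $p\ge N$ the symbol $p_*$ in \eqref{critical1} means ``any finite exponent,'' so you must fix some $m\in(\hat q,\infty)$ and interpolate between $L^p(\rand)$ and $L^{m}(\rand)$; and after the first Young step the leftover term $C_0\|u\|_p^p$ should be absorbed into the $\delta^{-1/(p-1)}$-term (say for $\delta\le 1$), which costs nothing since the statement only requires the bound for each fixed $\eps$.
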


\begin{proposition}(\cite[Proposition 2.2]{Marino-Winkert-2019})\label{proposition_final_interation}
    Let $\Omega \subset \R^N$, $N>1,$ be a bounded domain with Lipschitz boundary $\rand$. Let $u \in \Lp{p}$ with $u\geq 0$ and $1<p<\infty$ such that
    \begin{align*}
	\|u\|_{\alpha_n}\leq C
    \end{align*}
    with a constant $C>0$ and a sequence $(\alpha_n)\subseteq \R_+$ with $\alpha_n\to \infty$ as $n\to \infty$. Then, $u \in \Linf$.
\end{proposition}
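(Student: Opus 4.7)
The plan is to argue by contradiction, exploiting the fact that on a set of positive finite Lebesgue measure the $\alpha_n$-th root of the measure converges to $1$. Suppose $u \notin \Linf$. Since $u \geq 0$, this means $\|u\|_\infty > C$, so there exists $\eps > 0$ for which the superlevel set
\begin{align*}
    E := \{x \in \Omega : u(x) > C + \eps\}
\end{align*}
has strictly positive Lebesgue measure. Because $\Omega$ is bounded, $|E|$ is also finite, i.e.\ $0 < |E| < \infty$.

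The key step is the elementary pointwise lower bound $u^{\alpha_n} \geq (C+\eps)^{\alpha_n}$ on $E$, which after integration gives
\begin{align*}
    \|u\|_{\alpha_n}^{\alpha_n} = \into u^{\alpha_n} \, dx \geq \int_E u^{\alpha_n} \, dx \geq (C+\eps)^{\alpha_n} |E|.
\end{align*}
Taking the $\alpha_n$-th root yields $\|u\|_{\alpha_n} \geq (C+\eps) |E|^{1/\alpha_n}$, a lower bound in which the dependence on $n$ is completely isolated in the factor $|E|^{1/\alpha_n}$.

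Finally, since $|E| \in (0,\infty)$ is a fixed positive constant and $\alpha_n \to \infty$, one has $|E|^{1/\alpha_n} \to 1$. Combining with the standing hypothesis $\|u\|_{\alpha_n} \leq C$, passing to the limit gives $C \geq C+\eps$, which is the desired contradiction. Hence $\|u\|_\infty \leq C$ and therefore $u \in \Linf$. I do not expect any genuine obstacle in this proof: the argument rests solely on the boundedness of $\Omega$ (to ensure $|E| < \infty$) and the elementary limit $t^{1/\alpha_n} \to 1$ for fixed $t > 0$; the hypothesis $u \in \Lp{p}$ is used only to guarantee that $u$ is a well-defined measurable function for which superlevel sets make sense.
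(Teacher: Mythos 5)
Your proof is correct and is essentially the standard argument for this statement (the paper only cites it from Marino--Winkert): pick a superlevel set $E=\{u>C+\eps\}$ of positive finite measure, bound $\|u\|_{\alpha_n}\geq (C+\eps)\,|E|^{1/\alpha_n}$, and let $\alpha_n\to\infty$ to contradict $\|u\|_{\alpha_n}\leq C$. The cited proof runs the same Chebyshev-type estimate, merely phrased with an arbitrary level $K>0$ in place of $C+\eps$, so there is no substantive difference.
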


\begin{proposition}(\cite[Proposition 2.4]{Marino-Winkert-2019})\label{proposition_boundedness_boundary}
    Let $\Omega \subset \R^N$, $N>1,$ be a bounded domain with Lipschitz boundary $\rand$ and let $1<p<\infty$. 
    If $u \in \Wp{p}\cap \Linf$, then $u \in L^\infty(\partial\Omega)$.
\end{proposition}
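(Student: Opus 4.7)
The plan is to combine a truncation argument with the continuity of the trace. Set $M := \|u\|_\infty$ and consider the $1$-Lipschitz truncation $T_M(s) := \max\{-M, \min\{s, M\}\}$, which satisfies $|T_M(s)| \leq M$ for every $s \in \R$. Since $|u| \leq M$ almost everywhere in $\Omega$, one has $T_M(u) = u$ as elements of $W^{1,p}(\Omega)$, so that $\gamma(T_M(u)) = \gamma(u)$ in $L^p(\partial\Omega)$, where $\gamma$ denotes the trace operator.

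By the Lipschitz regularity of $\partial\Omega$, I would pick $u_n \in C^\infty(\overline{\Omega})$ with $u_n \to u$ in $W^{1,p}(\Omega)$ and set $v_n := T_M(u_n) \in W^{1,p}(\Omega) \cap C(\overline{\Omega})$. The first step is to verify that $v_n \to u$ in $W^{1,p}(\Omega)$. The $L^p$-convergence follows from the contraction $|T_M(a) - T_M(b)| \leq |a - b|$ together with $T_M(u) = u$; for the gradients, the Sobolev chain rule gives $\nabla v_n = \chi_{\{|u_n| < M\}} \nabla u_n$, and a dominated-convergence argument, using the standard fact that $\nabla u = 0$ a.e.\ on the level set $\{|u| = M\}$, yields $\nabla v_n \to \nabla u$ in $L^p(\Omega; \R^N)$.

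With $W^{1,p}$-convergence established, the continuity of the trace operator $\gamma \colon W^{1,p}(\Omega) \to L^p(\partial\Omega)$ implies $\gamma(v_n) \to \gamma(u)$ in $L^p(\partial\Omega)$, and therefore $\sigma$-a.e.\ on $\partial\Omega$ along a suitable subsequence. Because each $v_n$ is continuous on $\overline{\Omega}$ with $|v_n| \leq M$ pointwise, its boundary trace $v_n|_{\partial\Omega}$ already satisfies $|\gamma(v_n)| \leq M$ everywhere on $\partial\Omega$. Passing to the limit along the subsequence gives $|\gamma(u)| \leq M$ $\sigma$-almost everywhere, so $u \in L^\infty(\partial\Omega)$ with the quantitative bound $\|u\|_{\infty,\partial\Omega} \leq \|u\|_\infty$.

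The main obstacle is the strong gradient convergence $\nabla v_n \to \nabla u$, which rests on the Stampacchia-type vanishing of $\nabla u$ on $\{|u| = M\}$; without that identity one would have to settle for weak convergence and recover the $L^\infty$ bound by a lower-semicontinuity argument. A slicker but less self-contained alternative would be to invoke a chain-rule theorem for traces of Lipschitz compositions, giving $\gamma(T_M(u)) = T_M(\gamma(u))$ directly and bypassing the smooth approximation altogether; the truncation-and-approximation route sketched above is preferable since it relies only on the density of $C^\infty(\overline{\Omega})$ in $W^{1,p}(\Omega)$ and the continuity of $\gamma$.
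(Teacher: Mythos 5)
Your argument is correct. For the record, the paper itself gives no proof of this proposition: it is quoted from Marino--Winkert (Nonlinear Anal.\ 180 (2019), Proposition 2.4), so there is no internal argument to compare against; the shortest standard route is exactly the ``slicker alternative'' you mention at the end, namely that the trace commutes with Lipschitz truncations (equivalently, $\gamma(w^{+})=(\gamma w)^{+}$ applied to $w=u-\|u\|_\infty$ and $w=-u-\|u\|_\infty$), which yields $\|\gamma u\|_{\infty,\partial\Omega}\le\|u\|_\infty$ in one line. Your truncation-and-approximation proof is a legitimate self-contained substitute, with two small points worth making explicit: (i) the identity $\gamma(v_n)=v_n|_{\partial\Omega}$ for the merely continuous Sobolev functions $v_n=T_M(u_n)$ is not the definition of the trace (which extends restriction from smooth functions), so it needs a one-line justification, e.g.\ smoothing the truncation, $T_M^\eps(u_n)\to T_M(u_n)$ uniformly on $\overline{\Omega}$ and in $W^{1,p}(\Omega)$; (ii) the dominated-convergence step for $\chi_{\{|u_n|\ge M\}}\nabla u$ requires passing to a subsequence along which $u_n\to u$ a.e.\ and then invoking the subsequence principle. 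Moreover, as you yourself note, the delicate ingredient (Stampacchia's theorem, i.e.\ $\nabla u=0$ a.e.\ on $\{|u|=M\}$) can be avoided entirely: since $\|v_n\|_{1,p}\le\|u_n\|_{1,p}+c$ is bounded and $v_n\to u$ in $L^p(\Omega)$, one has $v_n\rightharpoonup u$ in $W^{1,p}(\Omega)$, and the compactness of the trace $\gamma\colon W^{1,p}(\Omega)\to L^p(\partial\Omega)$ already gives $\gamma(v_n)\to\gamma(u)$ in $L^p(\partial\Omega)$; the pointwise bound $|\gamma(v_n)|\le M$ then passes to the limit exactly as in your last step, giving $\|u\|_{\infty,\partial\Omega}\le\|u\|_\infty$.
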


In the following we will use the abbreviation
\begin{align*}
        L^\infty(\close):=L^\infty(\Omega) \cap L^\infty(\partial\Omega).
\end{align*}

%

\section{Main results}\label{sect3}

We now give the structure conditions on the nonlinearities in problem \eqref{problem}.

\begin{enumerate}[leftmargin=0.7cm]
 \item[(H)] 
    The functions $ \mathcal A_i\colon \Omega \times \R   \times \R^N  \to \R^N$, $\mathcal B_i\colon \Omega \times \R \times \R \times \R^N \times \R^N \to \R$ and $ \mathcal C_i\colon \rand \times \R \times \R \to \R, i= 1,2 $, are Carath\'eodory functions such that the following holds:
    \begin{align*}
        \text{(H1}) \quad  
	    & \vert \mathcal A_1(x, s, \xi) \vert \le A_1 \vert \xi \vert^{p-1}+  A_2 \vert s \vert^{r_1 \frac{p-1}{p}}+ A_3,  \\
        \text{(H2}) \quad 
	    & \vert \mathcal A_2(x, t, \zeta) \vert \le  \tilde A_1 \vert \zeta \vert^{q-1}+ \tilde A_2  \vert t \vert^{r_2 \frac{q-1}{q}}+ \tilde A_3,  \\
        \text{(H3}) \quad 
	    & \mathcal A_1(x, s, \xi) \cdot \xi \ge A_4 \vert \xi \vert^p- A_5 \vert s \vert^{r_1}- A_6,   \\
        \text{(H4}) \quad 
	    & \mathcal A_2(x, t, \zeta) \cdot \zeta \ge  \tilde A_4 \vert \zeta \vert^q-  \tilde A_5 \vert t \vert^{r_2}- \tilde A_6,  \\
        \text{(H5}) \quad 
	    & \vert \mathcal B_1(x, s, t, \xi, \zeta) \vert \le B_1 \vert s \vert^{b_1}+ B_2 \vert t \vert^{b_2}+ B_3 \vert s \vert^{b_3} \vert t \vert^{b_4}+ B_4 \vert \xi \vert^{b_5}\\
	    & \hspace*{3cm}+ B_5 \vert \zeta \vert^{b_6}+ B_6 \vert \xi \vert^{b_7} \vert \zeta \vert^{b_8}+ B_7,  \\
        \text{(H6}) \quad 
	    & \vert \mathcal B_2(x, s, t, \xi, \zeta) \vert \le  \tilde B_1 \vert s \vert^{\tilde b_1}+ \tilde B_2 \vert t \vert^{\tilde b_2}+ \tilde B_3  \vert s \vert^{\tilde b_3} \vert t \vert^{\tilde b_4}+ \tilde B_4  \vert \xi \vert^{\tilde b_5}\\
	    & \hspace*{3cm}
	    + \tilde B_5  \vert \zeta \vert^{\tilde b_6}+ \tilde B_6  \vert \xi \vert^{\tilde b_7} \vert \zeta \vert^{\tilde b_8}+ \tilde B_7, \\
        \text{(H7}) \quad 
	    & \vert \mathcal C_1(x, s,t) \vert \le C_1 \vert s \vert^{c_1}+ C_2 \vert t \vert^{c_2}+ C_3 \vert s \vert^{c_3} \vert t \vert^{c_4}+ C_4,  \\
        \text{(H8}) \quad 
	    & \vert \mathcal C_2(x, s,t) \vert \le  \tilde C_1 \vert s \vert^{\tilde c_1}+ \tilde C_2  \vert t \vert^{\tilde c_2}+ \tilde C_3 \vert s \vert^{\tilde c_3} \vert t \vert^{\tilde c_4}+ \tilde C_4, 
\end{align*}
for a.e.\,$x \in \Omega$, respectively for a.e.\,$x \in \rand$, for all $ s, t \in \R $, for all $ \xi, \zeta \in \R^N $, with nonnegative constants $ A_i, \tilde A_i, B_j, \tilde B_j, C_k, \tilde C_k \,  (i \in \{1, \dots, 6\}$, $j \in \{1, \dots, 7\}$, $k \in \{1,\dots, 4\}) $ and with $ 1< p, q< \infty$. Moreover, the exponents $ b_i, \tilde b_i, c_j, \tilde c_j, r_1, r_2  \, (i \in \{1, \dots, 8\}, j \in \{1, \dots, 4\})$ are nonnegative and satisfy the following assumptions
\begin{align*}
      \hspace*{0.3cm}\begin{array}[]{lll}
		        \text{(E1)} \quad  r_1 \le p^*
                & \text{(E2)} \quad  r_2 \le q^*
                &\\[3ex]
		\text{(E3)} \quad b_1 \le p^*-1
                & \text{(E4)} \quad \displaystyle b_2 < \frac{q^*}{p^*}(p^*-p)
                & \text{(E5)} \quad  \displaystyle\frac{b_3}{p^*}+ \frac{b_4}{q^*}< \frac{p^*-p}{p^*}\\[3ex]
                \text{(E6)} \quad b_5 \le p-1
                & \text{(E7)} \quad \displaystyle b_6 < \frac{q}{p^*}(p^*-p)
                & \text{(E8)} \quad\displaystyle \frac{b_7}{p}+ \frac{b_8}{q}< \frac{p^*-p}{p^*}\\[3ex]
                \text{(E9)}\displaystyle \quad \tilde b_1 < \frac{p^*}{q^*}(q^*-q)
                & \text{(E10)} \quad \displaystyle \tilde b_2 \le q^*-1
                & \text{(E11)} \quad\displaystyle\frac{\tilde b_3}{p^*}+ \frac{\tilde b_4}{q^*}< \frac{q^*-q}{q^*}\\[3ex]
                \text{(E12)} \displaystyle\quad  \tilde b_5 < \frac{p}{q^*}(q^*-q)
                & \text{(E13)} \quad \displaystyle\tilde b_6 \le q- 1
                & \text{(E14)} \quad\displaystyle\frac{\tilde b_7}{p}+ \frac{\tilde b_8}{q}< \frac{q^*-q}{q^*}\\[3ex]
                 \text{(E15)} \quad c_1  \le p_*-1
                & \text{(E16)} \quad \displaystyle c_2 < \frac{q_*}{p_*}(p_*-p)
                & \text{(E17)} \quad\displaystyle\frac{c_3}{p_*}+ \frac{c_4}{q_*}< \frac{p_*-p}{p_*}\\[3ex]
                \text{(E18)}\displaystyle \quad  \tilde c_1 < \frac{p_*}{q_*}(q_*-q)
                & \text{(E19)} \quad \displaystyle\tilde c_6 \le q_*-1
                & \text{(E20)} \quad\displaystyle\frac{\tilde c_3}{p_*}+ \frac{\tilde c_4}{q_*}< \frac{q_*-q}{q_*},
        \end{array}
\end{align*}
where the numbers $ p^*, p_*, q^*, q_*$ are defined by \eqref{critical} and \eqref{critical1}. 
\end{enumerate}

A couple $ (u,v) \in W^{1,p}(\Omega) \times W^{1,q}(\Omega) $ is said to be a weak solution of problem \eqref{problem} if
        \begin{align}\label{weak}
            \begin{split}
                    \into \mathcal A_1(x, u, \nabla u) \cdot \nabla \varphi \,dx&= \into \mathcal B_1(x, u, v, \nabla u, \nabla v) \varphi \,dx+ \int_{\rand} \mathcal C_1(x, u, v) \varphi \,d\sigma \\
                    \into \mathcal A_2(x, v, \nabla v) \cdot \nabla \psi \,dx&= \into \mathcal B_2(x, u, v, \nabla u, \nabla v) \psi \,dx+ \int_{\rand} \mathcal C_2(x, u, v) \psi \,d\sigma \\
            \end{split}
        \end{align}
holds for all $ (\varphi, \psi) \in W^{1,p}(\Omega) \times W^{1,q}(\Omega) $. By hypotheses (H) and the Sobolev embedding along with the continuity of the trace operator it is clear that this definition of a weak solution is well-defined. Indeed, if we estimate the integral concerning the function $\mathcal B_1\colon \Omega \times \R \times \R \times \R^N \times \R^N \to \R$  using condition (H5) we obtain several mixed terms. Let us consider, for example, the third term on the right-hand side of (H5). Applying H\"older's inequality we get
\begin{align}\label{well-definedness}
    \begin{split}
        &B_3\into |u|^{b_3}|v|^{b_4} \ph \,dx\\
        &\leq B_3\left(\into|u|^{b_3 s_1}\,dx\right)^{\frac{1}{s_1}}\left(\into|v|^{b_4 s_2}\,dx\right)^{\frac{1}{s_2}} \left(\into|\ph|^{ s_3}\,dx\right)^{\frac{1}{s_3}},
    \end{split}
\end{align}
where $ (u,v) \in W^{1,p}(\Omega) \times W^{1,q}(\Omega) $, $\ph \in \Wp{p}$ and
\begin{align*}
        \frac{1}{s_1}+\frac{1}{s_2}+\frac{1}{s_3}=1.
\end{align*}
Taking $s_3=p^*$ and using $s_1 \leq \frac{p^*}{b_3}$ as well as $s_2 \leq \frac{q^*}{b_4}$  leads to 
\begin{align}\label{condition_weak}
        \frac{b_3}{p^*}+ \frac{b_4}{q^*}\leq \frac{p^*-1}{p^*}.
\end{align}
This condition is necessary for the finiteness of the integrals of the right-hand side of \eqref{well-definedness}, see also Remark \ref{remark_well_definitedness}. Since we need some stronger conditions in order to apply Moser's iteration, we suppose condition (E5) which implies \eqref{condition_weak}. In the same way we can prove the finiteness of all integrals in the definition of \eqref{weak}.

Our first result shows that any weak solution of problem \eqref{problem} belongs to the space $ L^r(\close) \times L^r(\close)$ for any finite $r$.

\begin{theorem} \label{th1}
    Let $ \Omega \subset \R^N, N> 1$, be a bounded domain with Lipschitz boundary $ \rand $ and let hypotheses (H) be satisfied. Then, every weak solution $ (u, v) \in W^{1,p}(\Omega) \times W^{1,q}(\Omega) $ of problem \eqref{problem} belongs to $ L^{r}(\close) \times L^{r}(\close)$ for every $ r \in (1, \infty)$. 
\end{theorem}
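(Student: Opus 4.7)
The plan is to run a coupled Moser iteration for the two equations simultaneously, treating the domain and boundary integrals on the same footing. Fix a weak solution $(u,v)$ and work with $u^+$ (the argument for $u^-$, $v^+$, $v^-$ is analogous, with sign-symmetric test functions). For $h>1$ set $u_h := \min\{u^+,h\}$ and, for a parameter $\kappa \ge 0$ that will be the iteration index, insert the bounded test function
\begin{equation*}
\varphi = u^+\, u_h^{\kappa p} \in W^{1,p}(\Omega)
\end{equation*}
into the first line of \eqref{weak}, and the analogous $\psi = v^+ v_h^{\lambda q}$ into the second line. Using (H3) on the left to get a lower bound of the form
\begin{equation*}
A_4 \int_\Omega |\nabla u^+|^p u_h^{\kappa p}\,dx \;\le\; \int_\Omega \mathcal A_1 \cdot \nabla \varphi\,dx + A_5\int_\Omega (u^+)^{r_1+1}u_h^{\kappa p}\,dx + \dots,
\end{equation*}
and the identity $|\nabla(u^+ u_h^\kappa)|^p \le C_p(|\nabla u^+|^p u_h^{\kappa p} + \kappa^p |\nabla u_h|^p (u^+)^p u_h^{(\kappa-1)p})$, I would rewrite the left-hand side as a full Sobolev norm of $w := u^+ u_h^\kappa$.

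Next, every term produced by (H5) on the right-hand side of the first equation and by (H7) on the boundary is a monomial in $u, v, \nabla u, \nabla v$. For each such monomial I would apply Hölder's inequality splitting the factors into $u^+u_h^\kappa$, $v^+v_h^\lambda$, powers of $\nabla u$ or $\nabla v$, and the remaining "free" factors; the exponent constraints (E3)--(E20) are precisely calibrated so that the free factors sit in $L^{p^*}$, $L^{q^*}$, $L^{p_*}(\partial\Omega)$, $L^{q_*}(\partial\Omega)$ respectively, with a strict inequality giving a small positive gap that becomes the exponent of absorption. Terms involving $|\nabla u|^{b_5}$, $|\nabla v|^{b_6}$, $|\nabla u|^{b_7}|\nabla v|^{b_8}$ with subcritical gradient exponents (E6)--(E8) get absorbed into the leading term via Young's inequality with $\varepsilon$; the boundary contribution from (H7) is estimated by Proposition \ref{proposition_boundary_integral} applied to $w = u^+ u_h^\kappa$, giving $\eps \|w\|_{1,p}^p$ plus a controllable $L^p$ term. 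After symmetric treatment of the second equation, adding the two resulting inequalities produces a pair of coupled estimates of the shape
\begin{equation*}
\|u^+u_h^\kappa\|_{1,p}^{p} + \|v^+v_h^\lambda\|_{1,q}^{q} \;\le\; K(\kappa,\lambda)\bigl(1 + \|u^+u_h^\kappa\|_{p}^p + \|v^+v_h^\lambda\|_{q}^q + \text{(lower order)}\bigr).
\end{equation*}

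Sending $h \to \infty$ via monotone convergence (using that the right-hand side is a priori finite once the previous iteration step has delivered the required integrability of $u$, $v$) and invoking the Sobolev embedding $W^{1,p}(\Omega)\hookrightarrow L^{p^*}(\Omega)$ and $W^{1,q}(\Omega)\hookrightarrow L^{q^*}(\Omega)$ upgrades the left-hand side to $\|u^+\|_{(\kappa+1)p^*}^{(\kappa+1)p}+\|v^+\|_{(\lambda+1)q^*}^{(\lambda+1)q}$. Then I would set up the two-index recursion $\kappa_{n+1}p^* = (\kappa_n+1)p$ (and analogously for $\lambda_n$) starting from $\kappa_0=\lambda_0=0$; since $p^*/p>1$ and $q^*/q>1$ both sequences tend to infinity, and finiteness of one $L^r$ norm at step $n$ feeds the hypothesis for step $n+1$ in the \emph{other} variable through the coupled mixed terms. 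An induction on $n$ shows $(u,v) \in L^{\kappa_n p^*}(\Omega)\times L^{\lambda_n q^*}(\Omega)$ for every $n$, which yields the $\Omega$-statement; the $\partial\Omega$-statement follows by Proposition \ref{proposition_boundedness_boundary} once domain integrability is established, or alternatively by carrying the boundary norms along the iteration using (E15)--(E20) and the trace embedding into $L^{p_*}(\partial\Omega),L^{q_*}(\partial\Omega)$.

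The main obstacle I anticipate is bookkeeping the many mixed terms with uniform control of the constants in the iteration: one has to verify that the strict inequalities in (E4)--(E20) each supply a positive Hölder remainder that is \emph{uniform} in $\kappa,\lambda$, so that the $\eps$-Young absorption is legitimate at every step, and that the constants $K(\kappa,\lambda)$ grow only polynomially. The coupling between the two equations also forces the two iteration indices to advance in tandem; choosing the right pairing (e.g.\ $\lambda_n$ as an explicit function of $\kappa_n$ dictated by (E4)--(E5), (E9)--(E11)) is the delicate bookkeeping point, but once it is set up correctly the rest is Moser's standard scheme.
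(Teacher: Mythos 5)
Your blanket claim that the exponent constraints supply ``a strict inequality giving a small positive gap that becomes the exponent of absorption'' fails exactly for the terms that make this theorem nontrivial: (E1), (E3), (E10), (E15), (E19) are \emph{not} strict, so $r_1=p^*$, $b_1=p^*-1$, $c_1=p_*-1$ (and their $q$-analogues) are allowed, i.e.\ critical growth in $u$ both in $\Omega$ and on $\rand$. After testing with $\varphi=u\,u_h^{\kappa p}$ these hypotheses (together with the conjugate part of your Young step for $|\nabla u|^{b_5}$) produce the terms $\int_\Omega u^{p^*}u_h^{\kappa p}\,dx$ and $\int_{\rand} u^{p_*}u_h^{\kappa p}\,d\sigma$, and for them there is no H\"older gap and no $\varepsilon$-Young absorption: H\"older places the remaining factor $(u u_h^{\kappa})^p$ exactly in $L^{p^*/p}(\Omega)$ resp.\ $L^{p_*/p}(\rand)$, i.e.\ at the level of $\|u u_h^{\kappa}\|_{1,p}^p$ with a constant that is not small and, worse, grows with $\kappa$. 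The missing idea is the level-splitting used in the paper: write $u^{p^*}u_h^{\kappa p}=a\,(u u_h^{\kappa})^p$ with $a=u^{p^*-p}\in L^{p^*/(p^*-p)}(\Omega)$, split the integral over $\{a\le L\}$ and $\{a>L\}$, and use that
\begin{equation*}
H(L)=\Bigl(\int_{\{a>L\}}a^{\frac{p^*}{p^*-p}}\,dx\Bigr)^{\frac{p^*-p}{p^*}}\longrightarrow 0\quad\text{as }L\to\infty,
\end{equation*}
so that, choosing $L=L(\kappa,u)$ \emph{after} the $\kappa$-dependent constant is known, the tail is absorbed into the left-hand side $\|u u_h^{\kappa}\|_{1,p}^p$, while the part over $\{a\le L\}$ is controlled by $L\,\|u u_h^{\kappa}\|_{ps}^p$ with $ps<p^*$; the same is done on the boundary with $b=u^{p_*-p}$ and $K(G)$. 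Without this step the iteration inequality cannot be closed under (E1), (E3), (E15), and your scheme only covers the strictly subcritical case.

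Two further points. Proposition \ref{proposition_boundedness_boundary} requires $u\in\Wp{p}\cap\Linf$, so it cannot convert $L^r(\Omega)$-integrability into $L^r(\rand)$; the correct route is the alternative you mention, namely a second bootstrap on the boundary using that (E16), (E17) give $pt<p_*$ together with the trace embedding $\Wp{p}\to L^{p_*}(\rand)$ and Proposition \ref{proposition_boundary_integral}. Also, the coupled two-index iteration is unnecessary: since (E4), (E5), (E7), (E8), (E16), (E17) bound the $v$- and $\nabla v$-powers by the \emph{fixed} exponents $q^*$, $q$, $q_*$, all mixed terms are estimated by the fixed norms $\|v\|_{q^*}$, $\|\nabla v\|_q$, $\|v\|_{q_*,\rand}$ of the given solution, so the iteration for $u$ runs on its own, with $v$ entering only through constants; no alternating transfer of integrability between the components is needed (and your recursion should read $(\kappa_{n+1}+1)ps=(\kappa_n+1)p^*$, which increases the exponent, rather than $\kappa_{n+1}p^*=(\kappa_n+1)p$).
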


\begin{proof}
    Let $ (u, v) \in W^{1,p}(\Omega) \times W^{1,q}(\Omega) $ be a weak solution of \eqref{problem} in the sense of \eqref{weak}. We only show that $ u \in L^r(\close)$, the proof for $v$ can be done in the same way. Moreover, taking \eqref{pm} into account, without any loss of generality, we can assume that $ u, v \ge 0 $ (otherwise we prove the result for $u^+, v^+$ and $u^-, v^-$, respectively). Moreover, throughout the proof we will denote by $M_i$, $i= 1, 2, \dots$, constants which may depend on some natural norms of $u$ and $v$.
    
    For every $ h \ge 0 $ we set $ u_h:= \min\{u, h\}$ and choose $ \varphi= u u_h^{\kappa p} \in \Wp{p}$ for $\kappa>0$ as test function in the first equation of \eqref{weak}. Since $ \nabla \varphi= u_h^{\kappa p} \nabla u + \kappa p u  u_h^{\kappa p- 1} \nabla u_h$ this results in
    \begin{align}\label{1}
        \begin{split}
            & \into (\mathcal A_1(x, u, \nabla u) \cdot \nabla u) u_h^{\kappa p} \,dx+ \kappa p \into (\mathcal A_1(x, u, \nabla u) \cdot \nabla u_h) u u_h^{\kappa p-1}  \,dx \\
            &= \into \mathcal B_1(x, u, v, \nabla u, \nabla v) u u_h^{\kappa p} \,dx+ \int_{\rand} \mathcal C_1(x, u, v) u u_h^{\kappa p} \,d\sigma.
        \end{split}
    \end{align}
    Now we apply (H3) to the first term on the left-hand side of \eqref{1} which gives
    \begin{align*}
        \begin{split} 
            &\into (\mathcal A_1(x, u, \nabla u) \cdot \nabla u) u_h^{\kappa p} \,dx \\
            & \ge \into \left(A_4 \vert \nabla u \vert^p- A_5 u^{r_1}- A_6 \r) u_h^{\kappa p} \,dx \\
            & \ge A_4 \into \vert \nabla u \vert^p u_h^{\kappa p} \,dx- (A_5+A_6) \into u^{p^*} u_h^{\kappa p} \,dx- (A_5+A_6) \vert \Omega \vert.
        \end{split}
    \end{align*}
    In the same way we use (H3) to the second term on the left-hand side. This shows
    \begin{align*}
        \begin{split}
            &\kappa p \into (\mathcal A_1(x, u, \nabla u) \cdot \nabla u_h) u u_h^{\kappa p-1}  \,dx \\
            &= \kappa p \int_{\{x \in \Omega: \, u(x) \le h\}} (\mathcal A_1(x, u, \nabla u) \cdot \nabla u) u_h^{\kappa p} \,dx \\
            & \ge \kappa p \int_{\{x \in \Omega: \, u(x) \le h\}} \l(A_4 \vert \nabla u \vert^p- A_5 u^{r_1}- A_6 \r) u_h^{\kappa p} \,dx \\
            & \ge A_4 \kappa p \int_{\{x \in \Omega: \, u(x) \le h\}} \vert \nabla u \vert^p u_h^{\kappa p} \,dx\\
            & \quad - \kappa p (A_5+ A_6)  \into u^{p^*} u_h^{\kappa p} \,dx- \kappa p (A_5+A_6) \vert \Omega \vert.
        \end{split}
    \end{align*}
    Taking (H5) into account we get for the first term on the right-hand side of \eqref{1} the following estimate 
    \begin{align}\label{4}
        \begin{split}
            & \into \mathcal B_1(x, u, v, \nabla u, \nabla v) u u_h^{\kappa p} \,dx \\
            & \le \into \l(B_1 u^{b_1}+ B_2 v^{b_2}+ B_3 u^{b_3} v^{b_4}+ B_4 \vert \nabla u \vert^{b_5}\right.\\
            & \quad \left.+ B_5 \vert \nabla v \vert^{b_6}+ B_6 \vert \nabla u \vert^{b_7} \vert \nabla v \vert^{b_8}+ B_7  \r) u u_h^{\kappa p} \,dx.
        \end{split}
    \end{align}
    We are going to estimate each term of the inequality above separately. First, taking into account assumption (E3), we have
    \begin{align*}
        B_1 \into u^{b_1} u u_h^{\kappa p} \,dx \le B_1 \into u^{p^*} u_h^{\kappa p} \,dx+ B_1 \vert \Omega \vert.
    \end{align*}
    Moreover, thanks to H\"older's inequality with  $ s_1> 1$ such that $ b_2 s_1 = q^*$, which is possible by (E4), we have
    \begin{align*}
	\begin{split}
	    B_2 \into v^{b_2} u u_h^{\kappa p} \,dx 
	    & \le B_2 \l(\into v^{b_2 s_1} \,dx \r)^{1/{s_1}} \l(\into (u u_h^{\kappa p})^{s_1'} \,dx \r)^{1/{s_1'}} \\
	    & \le M_1 \l(1+ \Vert u u_h^{\kappa} \Vert_{p s_1'}^p \r).
	\end{split}
    \end{align*}
    Applying again H\"older's inequality with exponents $x_1, y_1, z_1>1 $ such that
    \begin{equation}\label{cases1}
        b_3 x_1 = p^*, \qquad   b_4 y_1 = q^*, \qquad  \frac{1}{z_1}= 1- \frac{1}{x_1}- \frac{1}{y_1}
    \end{equation}
    leads to
    \begin{align*}
        \begin{split}
            & B_3 \into u^{b_3} v^{b_4} u u_h^{\kappa p} \,dx\\
            & \le B_3 \l(\into u^{b_3 x_1} \,dx \r)^{1/{x_1}} \l(\into v^{b_4 y_1} \,dx \r)^{1/{y_1}} \l(\into (u u_h^{\kappa p})^{z_1} \,dx \r)^{1/{z_1}} \\
            & \le M_2 \l(1+ \Vert u u_h^{\kappa} \Vert_{pz_1}^p \r).
        \end{split}
    \end{align*}
    Note that from (E5) it follows that $b_3<p^*$ as well as $b_4<q^*$ and so the choice in \eqref{cases1} is possible. Thanks to Young's inequality with $ \frac{p}{b_5}> 1 $ we have
    \begin{align*}
        \begin{split}
            B_4 \into \vert \nabla u \vert^{b_5} u u_h^{\kappa p}\,dx 
            &= B_4 \into \l(\l(\frac{A_4}{2 B_4}\r)^{\frac{b_5}{p}} \vert \nabla u \vert^{b_5} u_h^{\kappa b_5} \r) \l(\l(\frac{A_4}{2 B_4}\r)^{-\frac{b_5}{p}} u u_h^{\kappa (p- b_5)} \r) \,dx \\
            & \le \frac{A_4}{2} \into \vert \nabla u \vert^p u_h^{\kappa p} \,dx+ B_4 \l(\frac{A_4}{2 B_4}\r)^{-\frac{b_5}{p-b_5}} \into u^{\frac{p}{p-b_5}} u_h^{\kappa p} \,dx \\
            & \le \frac{A_4}{2}  \into \vert \nabla u \vert^p u_h^{\kappa p} \,dx+ M_3 \l(1+ \into u^{p^*} u_h^{\kappa p} \,dx\r).
        \end{split}
    \end{align*}
    We apply H\"older's inequality with $ s_2> 1$ such that $ b_6 s_2 = q $ in order to get
    \begin{align*}
        \begin{split}
            B_5 \into \vert \nabla v \vert^{b_6} u u_h^{\kappa p} \,dx 
            & \le B_5 \l(\into \vert \nabla v \vert^{b_6 s_2} \,dx \r)^{1/{s_2}} \l(\into (u u_h^{\kappa p})^{s_2'} \,dx \r)^{1/{s_2'}} \\
            & \le M_4 \l(1+ \Vert u u_h^{\kappa} \Vert_{p s_2'}^p\r).
        \end{split}
    \end{align*}
    As before, by H\"older's inequality with $ x_2, y_2, z_2 >1$ such that
    \begin{equation} \label{cases2}
            b_7 x_2 = p,\qquad   b_8 y_2 = q,\qquad  \frac{1}{z_2}= 1- \frac{1}{x_2}- \frac{1}{y_2}
    \end{equation}
   we obtain
     \begin{align*}
        \begin{split}
            &B_6 \into \vert \nabla u \vert^{b_7} \vert \nabla v \vert^{b_8} u u_h^{\kappa p} \,dx\\ 
            & \le B_6 \l(\into \vert \nabla u \vert^{b_7 x_2} \,dx \r)^{1/{x_2}} \l(\into \vert \nabla v \vert^{b_8 y_2} \,dx \r)^{1/{y_2}} \l(\into (u u_h^{\kappa})^{z_2} \,dx \r)^{1/{z_2}} \\
            & \le M_5 \l(1+ \Vert u u_h^{\kappa} \Vert_{p z_2}^p \r),
        \end{split}
    \end{align*}
    which is possible because of (E8).   Finally, for the last term on the right-hand side of \eqref{4} we have
    \begin{align*}
            B_7 \into u u_h^{\kappa p}\,dx \le B_7 \into u^{p^*} u_h^{\kappa p} \,dx+ B_7 \vert \Omega \vert.
    \end{align*}
    Hypothesis (H7) gives  the following estimate for the boundary term of \eqref{1}
    \begin{align}\label{c1}
            \int_{\rand} \mathcal C_1(x, u, v) u u_h^{\kappa p} \,d\sigma \le \int_{\rand} \l(C_1 u^{c_1}+ C_2 v^{c_2}+ C_3 u^{c_3} v^{c_4}+ C_4 \r) u u_h^{\kappa p} \,d\sigma.
    \end{align}
    Exploiting the condition on $ c_1$ in the first term of \eqref{c1} and applying H\"older's inequality with $ t_1> 1$ such that $ c_2 t_1 = q_*$ to the second one we have
    \begin{align*}
        C_1 \int_{\rand} u^{c_1+1} u_h^{\kappa p} \,d\sigma \le C_1 \int_{\rand} u^{p_*} u_h^{\kappa p} \,d\sigma+ C_1 \vert \rand \vert
    \end{align*}
    and
    \begin{align*}
        \begin{split}
            C_2 \int_{\rand} v^{c_2} u u_h^{\kappa p} \,d\sigma & \le C_2 \l(\int_{\rand} v^{c_2 t_1} \,d\sigma\r)^{1/{t_1}} \l(\int_{\rand} (u u_h^{\kappa p})^{t_1'} \,d\sigma \r)^{1/{t_1'}} \\
            & \le M_6 \l(1+ \Vert u u_h^{\kappa} \Vert_{p t_1', \rand}^p \r),
        \end{split}
    \end{align*}
    respectively. For the third term of \eqref{c1} we  apply H\"older's inequality with exponents $ x_3, y_3, z_3 >1$ such that
    \begin{equation} \label{cases3}
            c_3 x_3 = p_*, \qquad    c_4 y_3 = q_*, \qquad  \frac{1}{z_3}= 1- \frac{1}{x_3}- \frac{1}{y_3}
    \end{equation}
    in order to get
    \begin{align*}
        \begin{split}
            & C_3 \int_{\rand} u^{c_3} v^{c_4} u u_h^{\kappa p} \,d\sigma\\
            & \le C_3 \l(\int_{\rand} u^{c_3 x_3} \,d\sigma \r)^{1/{x_3}} \l(\int_{\rand} v^{c_4 y_3} \,d\sigma \r)^{1/{y_3}} \l(\int_{\rand} (u u_h^{\kappa p})^{z_3} \,d\sigma \r)^{1/{z_3}} \\
            & \le M_7 \l(1+ \Vert u u_h^{\kappa } \Vert_{pz_3, \rand}^p \r).
        \end{split}
    \end{align*}
    Finally, for the last term of \eqref{c1} we have    
    \begin{align*}
            C_4 \int_{\rand} u u_h^{\kappa p} \,d\sigma \le C_4 \int_{\rand} u^{p_*} u_h^{\kappa p} \,d\sigma+ C_4 \vert \rand \vert.
    \end{align*}
    Note that from the choice of $s_1, s_2$ and $t_1$ in combination with (E4), (E7) and (E16) we have
    \begin{align*}
            s_1', s_2'< \frac{p^*}{p} \quad\text{and}\quad t_1' <\frac{p_*}{p}.
    \end{align*}
    Furthermore, by  \eqref{cases1}, \eqref{cases2}, \eqref{cases3} and the conditions (E5), (E8) and (E17) we see that
    \begin{align*}
            z_1, z_2 <\frac{p^*}{p}\quad \text{and} \quad z_3<\frac{p_*}{p}.
    \end{align*}   
    Now we combine all the calculations above and set  
\begin{equation}
\label{s}
s:= \max\{s_1', s_2', z_1, z_2\} \in \left(1,\frac{p^*}{p}\right)
\end{equation}
as well as 
\begin{equation}
\label{t}
t:= \max\{t_1', z_3\}\in \left(1,\frac{p_*}{p}\right)
\end{equation}
which finally gives
    \begin{align*}
        \begin{split}
            & A_4 \biggl(\frac{1}{2} \into \vert \nabla u \vert^p u_h^{\kappa p} \,dx
            + \kappa p \int_{\{x \in \Omega: \, u(x) \le h\}} \vert \nabla u \vert^p u_h^{\kappa p} \,dx \biggr) \\
            & \le \l [(\kappa p+1)(A_5+ A_6)+ B_1+ B_7+ M_3 \r] \into u^{p^*} u_h^{\kappa p} \,dx + (C_1+ C_4) \int_{\rand} u^{p_*} u_h^{\kappa p} \,d\sigma\\
            & \qquad+ M_8  \Vert u u_h^{\kappa} \Vert_{p s}^p 
            + M_{9} \Vert u u_h^{\kappa} \Vert_{pt, \rand}^p+ M_{10} (\kappa+1).
        \end{split}
    \end{align*}
    Simplifying the inequality above leads to
    \begin{align}\label{6}
        \begin{split}
            & \frac{A_4}{2} \frac{\kappa p+1}{(\kappa+1)^p} \into \vert \nabla u u_h^{\kappa} \vert^p \,dx \\
            & \le M_{11}(\kappa p+1) \into u^{p^*} u_h^{\kappa p} \,dx+  M_{12} \int_{\rand} u^{p_*} u_h^{\kappa p} \,d\sigma+ M_8 \Vert u u_h^{\kappa} \Vert_{p s}^p \\
            & \qquad + M_{9} \Vert u u_h^{\kappa} \Vert_{pt, \rand}^p+ M_{10} (\kappa+1),
        \end{split}
    \end{align}
    see Marino-Winkert \cite[Inequality after (3.7)]{Marino-Winkert-2019}.
    Dividing by $\frac{A_4}{2}$, summarizing the constants and adding on both sides of \eqref{6} the nonnegative term $ \frac{\kappa p+1}{(\kappa+1)^p} \Vert u u_h^{\kappa} \Vert_p^p$ gives
    \begin{align}\label{dis}
        \begin{split}
            & \frac{\kappa p+1}{(\kappa+1)^p} \Vert u u_h^{\kappa} \Vert_{1,p}^p \\
            & \le \frac{\kappa p+1}{(\kappa+1)^p} \Vert u u_h^{\kappa} \Vert_p^p+ M_{13} (\kappa p+1) \into u^{p^*} u_h^{\kappa p} \,dx+ M_{14} \int_{\rand} u^{p_*} u_h^{\kappa p} \,d\sigma \\
            & \qquad + M_{15} \Vert u u_h^{\kappa} \Vert_{p s}^p+ M_{16} \Vert u u_h^{\kappa} \Vert_{pt, \rand}^p+ M_{17} (\kappa+1) \\
            & \le M_{18} \l(\frac{\kappa p+1}{(\kappa+1)^p}+ 1\r) \Vert u u_h^{\kappa} \Vert_{ps}^p+ M_{13} (\kappa p+1) \into u^{p^*} u_h^{\kappa p} \,dx \\
            & \qquad + M_{14} \int_{\rand} u^{p_*} u_h^{\kappa p} \,d\sigma+ M_{16} \Vert u u_h^{\kappa} \Vert_{p t, \rand}^p+ M_{17} (\kappa+1),
        \end{split}
    \end{align}
    where we applied H\"older's inequality in the last passage. 
    
    Now, let $L, G> 0 $ and set $ a:= u^{p^*-p}$ and $ b:= u^{p_*-p}$. By using H\"older's inequality and the continuous embeddings $i\colon \Wp{p}\to \Lp{p^*}$ and $\gamma\colon \Wp{p}\to \Lprand{p_*}$ we obtain
    \begin{align}\label{crit}
	\begin{split}
	    & \into u^{p^*} u_h^{\kappa p} \,dx \\
	    & = \int_{\{x \in \Omega: \, a(x) \le L\}} a (u u_h^{\kappa})^p \,dx+ \int_{\{x \in \Omega: \, a(x)> L\}} a (u u_h^{\kappa})^p \,dx \\
	    & \le L \into (u u_h^{\kappa})^p \,dx \\
	    & \qquad + \l(\int_{\{x \in \Omega: \, a(x)> L\}} a^{\frac{p^*}{p^*-p}} \,dx \r)^{\frac{p^*-p}{p}} \l(\into (u u_h^{\kappa})^{p^*} \,dx \r)^{\frac{p}{p^*}} \\
	    & \le L \vert \Omega \vert^{1/{s'}} \Vert u u_h^{\kappa} \Vert_{ps}^p+ \l(\int_{\{x \in \Omega: \, a(x)> L\}} a^{\frac{p^*}{p^*-p}} \,dx \r)^{\frac{p^*-p}{p^*}} c_{\Omega}^p \Vert u u_h^{\kappa} \Vert_{1,p}^p
	\end{split}
    \end{align}
    and
    \begin{align}\label{crit1}
	\begin{split}
	    & \int_{\rand} u^{p_*} u_h^{\kappa p} \,d\sigma \\
	    &= \int_{\{x \in \rand: \, b(x) \le G\}} b (u u_h^{\kappa})^p \,d\sigma+ \int_{\{x \in \rand: \, b(x)> G\}} b (u u_h^{\kappa})^p \,d\sigma \\
	    & \le G \int_{\rand} (u u_h^{\kappa})^p \,d\sigma \\
	    & \qquad + \l(\int_{\{x \in \rand: \, b(x)> G\}} b^{\frac{p_*}{p_*-p}} \,d\sigma \r)^{\frac{p_*-p}{p_*}} \l(\int_{\rand} (u u_h^{\kappa})^{p_*} \,d\sigma \r)^{\frac{p}{p_*}} \\
	    & \le G \vert \rand \vert^{1/{t'}} \Vert u u_h^{\kappa} \Vert_{p t, \rand}+ \l(\int_{\{x \in \rand: \, b(x)> G\}} b^{\frac{p_*}{p_*-p}} \,d\sigma \r)^{\frac{p_*-p}{p_*}} c_{\rand}^p \Vert u u_h^{\kappa} \Vert_{1,p}^p
	\end{split}
    \end{align}
    with the embedding constants $c_{\Omega}$ and $ c_{\rand}$. We point out that
    \begin{equation}\label{limit}
	\begin{aligned}
	    & H(L):= \l(\int_{\{x \in \Omega: \, a(x)>L\}} a^{\frac{p^*}{p^*-p}} \,dx \r)^{\frac{p^*-p}{p^*}} \to 0 \quad && \text{as } L \to \infty, \\
	    & K(G):= \l(\int_{\{x \in \rand: \, b(x)> G\}} b^{\frac{p_*}{p_*-p}} \,d\sigma \r)^{\frac{p_*-p}{p_*}} \to 0 && \text{as } G \to \infty.
	\end{aligned}
    \end{equation}
    Combining \eqref{dis}, \eqref{crit}, \eqref{crit1} and \eqref{limit} yields
    \begin{align}\label{dis1}
	\begin{split}
	    & \frac{\kappa p+1}{(\kappa+1)^p}  \Vert u u_h^{\kappa} \Vert_{1,p}^p \\
	    & \le M_{19} \l(\frac{\kappa p+1}{(\kappa+1)^p}+ 1+ (\kappa p+1) L \vert \Omega \vert^{1/{s'}} \r) \Vert u u_h^{\kappa} \Vert_{ps}^p \\
	    & \qquad + M_{13} (\kappa p+1) H(L) c_{\Omega}^p \Vert u u_h^{\kappa} \Vert_{1,p}^p+ (M_{16}+ M_{14} G \vert \rand \vert^{1/{t'}}) \Vert u u_h^{\kappa} \Vert_{p t, \rand}^p \\
	    & \qquad + M_{14} K(G) c_{\rand}^p \Vert u u_h^{\kappa} \Vert_{1,p}^p+ M_{17} (\kappa+1).
	\end{split}
    \end{align}
    Taking \eqref{limit} into account we choose $ L= L(\kappa, u)> 0 $ and $ G= G(\kappa, u)> 0 $ such that
    \begin{align*}
	M_{13} (\kappa p+1) H(L) c_{\Omega}^p= \frac{\kappa p+1}{4(\kappa+1)^p} \quad \text{and} \quad M_{14} K(G) c_{\rand}^p= \frac{\kappa p+1}{4(\kappa+1)^p}.
    \end{align*}
    Therefore, inequality \eqref{dis1} can be written as
    \begin{align}\label{dis2}
	\begin{split}
	    & \frac{\kappa p+1}{2(\kappa+1)^p} \Vert u u_h^{\kappa} \Vert_{1,p}^p \\
	    & \le M_{19} \l(\frac{\kappa p+1}{(\kappa+1)^p}+ 1+ (\kappa p+1) L(\kappa, u) \vert \Omega \vert^{1/{s'}} \r) \Vert u u_h^{\kappa} \Vert_{ps}^p \\
	    &\qquad + (M_{16}+ M_{14} G(\kappa, u) \vert \rand \vert^{1/{t'}}) \Vert u u_h^{\kappa} \Vert_{p t, \rand}^p+ M_{17} (\kappa+1).
	\end{split}
    \end{align}
Taking into account \eqref{t} we have $ p t< p_*$. Thus, we can apply Proposition \ref{proposition_boundary_integral} to estimate the boundary term in \eqref{dis2}. This gives
    \begin{align}\label{bound}
	\begin{split}
	    \Vert u u_h^{\kappa} \Vert_{p t, \rand}^p & \le \eps_1 \Vert u u_h^{\kappa} \Vert_{1,p}^p+ \tilde c_1 \eps_1^{-\tilde c_2} \Vert u u_h^{\kappa} \Vert_p^p \\
	    & \le  \eps_1 \Vert u u_h^{\kappa} \Vert_{1,p}^p+ \tilde c_1 \eps_1^{-\tilde c_2} \vert \Omega \vert^{1/{s'}} \Vert u u_h^{\kappa} \Vert_{ps}^p
	\end{split}
    \end{align}
    by H\"older's inequality.  Now we choose $ \eps_1$ such that 
    \begin{align*}
	\eps_1\l(M_{16}+ M_{14} G(\kappa, u) \vert \rand \vert^{1/{t'}}\r)= \frac{\kappa p+1}{4(\kappa+1)^p}.
    \end{align*}
    Applying \eqref{bound} to \eqref{dis2} and summarizing the constants results in
    \begin{align}\label{dis4}
	\Vert u u_h^{\kappa} \Vert_{1,p}^p \le M_{20}(\kappa, u,v) [\Vert u u_h^{\kappa} \Vert_{ps}^p+1]
    \end{align}
    with a constant $ M_{20}(\kappa, u,v) $ depending on $ \kappa$ and on the solution pair $(u,v)$, see the calculations above. 
    
    Now we are in the position to use the Sobolev embedding theorem on the left-hand side of \eqref{dis4}. We have
    \begin{align}\label{dis5}
	\Vert u u_h^{\kappa} \Vert_{p^*} \le c_{\Omega} \Vert u u_h^{\kappa} \Vert_{1,p} \le M_{21}(\kappa, u,v) \l[\Vert u u_h^{\kappa} \Vert_{ps}^p+ 1 \r]^{\frac{1}{p}}.
    \end{align}
    Since, due to \eqref{s}, $ ps< p^*$, we can start with the bootstrap arguments. Choosing $ \kappa_1$ such that $ (\kappa_1+1) ps= p^*$, \eqref{dis5} becomes
    \begin{align}\label{dis6}
	\begin{split}
	    \Vert u u_h^{\kappa_1} \Vert_{p^*} 
	    & \le M_{21}(\kappa_1, u,v) \l[\Vert u u_h^{\kappa_1} \Vert_{ps}^p+ 1 \r]^{\frac{1}{p}}\\
	    & \le M_{21}(\kappa_1, u,v) \l[\Vert u^{\kappa_1+1} \Vert_{ps}^p+ 1 \r]^{\frac{1}{p}} \\
	    &= M_{21}(\kappa_1, u,v) \l[\Vert u \Vert_{p^*}^{(\kappa_1+1)p}+ 1 \r]^{\frac{1}{p}}< \infty,
	\end{split}
    \end{align}
    where we have used the estimate $ u_h(x) \le u(x) $ for a.e.\,$ x \in \Omega$. The usage of Fatou's Lemma as $ h \to \infty $ in \eqref{dis6} gives
    \begin{align}\label{dis7}
	\Vert u \Vert_{(\kappa_1+1)p^*}= \Vert u^{\kappa_1+1} \Vert_{p^*}^{\frac{1}{\kappa_1+1}} \le M_{22}(\kappa_1, u,v) \l[\Vert u \Vert_{p^*}^{(\kappa_1+1)p}+1 \r]^{\frac{1}{(\kappa_1+1)p}}< \infty.
    \end{align}
    Hence, $ u \in L^{(\kappa_1+1)p^*}(\Omega)$. Repeating the steps from \eqref{dis5}-\eqref{dis7} for each $ \kappa$, we choose a sequence with the following properties
    \begin{align*}
	\begin{split}
	    & \kappa_2: (\kappa_2+1)ps = (\kappa_1+1) p^*,\\
	    & \kappa_3: (\kappa_3+1)ps = (\kappa_2+1) p^*,\\
	    & \qquad \vdots \qquad \qquad \qquad \qquad \vdots \quad \, .
	\end{split}
    \end{align*}
    Observe that the sequence $(\kappa_n)$ is constructed in such a way that $ \kappa_n+ 1= (\frac{p^*}{ps})^n$ for every $n \in \N$, with $\frac{p^*}{ps}> 1$, taking into account \eqref{s}.
    This implies that
    \begin{align}\label{dis8}
	\Vert u \Vert_{(\kappa+1)p^*} \le M_{23}(\kappa, u,v)
    \end{align}
    for any finite  $ \kappa>0$ with $ M_{23}(\kappa, u,v) $ being a positive constant which depends both on $ \kappa $ and on the solution pair $(u,v)$ itself. Therefore, $ u \in L^r(\Omega)$ for any $ r<\infty$. 

    Now we are going to prove that $ u \in L^r(\rand)$ for any finite $r$. To this end, let us consider again inequality \eqref{dis2}, that is,
    \begin{align}\label{dis9}
	\begin{split}
	    & \frac{\kappa p+1}{2(\kappa+1)^p} \Vert u u_h^{\kappa} \Vert_{1,p}^p\\ 
	    & \le M_{19} \l(\frac{\kappa p+1}{(\kappa+1)^p}+ 1+ (\kappa p+1) L(\kappa, u) \vert \Omega \vert^{1/{s'}} \r) \Vert u u_h^{\kappa} \Vert_{ps}^p \\
	    &\qquad + (M_{16}+ M_{14} G(\kappa, u) \vert \rand \vert^{1/{t'}}) \Vert u u_h^{\kappa} \Vert_{p t, \rand}^p+ M_{17} (\kappa+1).
	\end{split}
    \end{align}
    Exploiting \eqref{dis8}, inequality \eqref{dis9} can be written in the simple form
    \begin{align}\label{dis10}
	\Vert u u_h^{\kappa} \Vert_{1,p} \le M_{24}(\kappa, u,v) \l[\Vert u u_h^{\kappa} \Vert_{p t, \rand}^p+1 \r]^{\frac{1}{p}}.
    \end{align}
    Applying the embedding $\gamma\colon\Wp{p}\to \Lprand{p_*}$ to the right-hand side of \eqref{dis10} gives
    \begin{align*}
	\Vert u u_h^{\kappa} \Vert_{p_*, \rand} \le c_{\rand} \Vert u u_h^{\kappa} \Vert_{1,p} \le M_{25}(\kappa, u,v) \l[\Vert u u_h^{\kappa} \Vert_{p t, \rand}^p+ 1 \r]^{\frac{1}{p}}.
    \end{align*}
    Since $ p t< p_*$, we can proceed as before with a bootstrap argument, thus obtaining
    \begin{align*}
	\Vert u \Vert_{(\kappa+1)p_*, \rand} \le M_{26}(\kappa, u,v)
    \end{align*}
    for any finite number $ \kappa$ with $ M_{26}(\kappa, u,v)$ being a positive constant depending on $\kappa$ and on the solution pair $(u,v)$. Hence, $u \in L^r(\rand)$ for every $ r<\infty$. Combining this with the first part of the proof shows that $ u \in L^r(\close)$ for every finite $ r$. The same arguments can be applied for the function $v$ starting with the second equation in \eqref{weak}. This completes the proof.
\end{proof}

The next result states the $ L^{\infty}$-boundedness of weak solutions of problem \eqref{problem}.

\begin{theorem}\label{th2}
    Let $ \Omega \subset \RN, N>1$, be a bounded domain with a Lipschitz boundary $\rand$ and let the hypotheses (H) be satisfied. Then, for any weak solution $ (u,v) \in W^{1,p}(\Omega) \times W^{1,q}(\Omega)$ it holds $ (u, v) \in L^{\infty}(\close) \times L^{\infty}(\close)$.
\end{theorem}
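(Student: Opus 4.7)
The plan is to refine the Moser iteration of Theorem \ref{th1}, exploiting the fact that Theorem \ref{th1} already delivers $u, v \in L^r(\close)$ for every finite $r$. This extra integrability turns each critical-growth term in (H) into an effectively subcritical one and, crucially, allows me to replace the level-set splittings \eqref{crit}--\eqref{crit1} (which produced the $\kappa$-dependent thresholds $L(\kappa,u), G(\kappa,u)$) by plain H\"older estimates. The constants that result will then depend on $\kappa$ only polynomially, which is exactly what a standard Moser scheme needs in order to terminate in a \emph{uniform} $L^\infty$ bound rather than in a per-$\kappa$ bound as in Theorem \ref{th1}.

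Assuming $u, v \ge 0$ and inserting $\varphi = u\,u_h^{\kappa p}$ with $u_h := \min\{u, h\}$ into the first line of \eqref{weak}, most of the computation from Theorem \ref{th1} carries over verbatim up through \eqref{dis}. The essential change is in the treatment of the critical contributions: instead of invoking \eqref{crit}--\eqref{crit1}, I would use H\"older in the form
\begin{align*}
\into u^{p^*} u_h^{\kappa p}\,dx = \into u^{p^*-p}\,(u u_h^{\kappa})^p\,dx \le \Vert u\Vert_{(p^*-p)\rho}^{p^*-p}\,\Vert u u_h^{\kappa}\Vert_{p\rho'}^{p},
\end{align*}
with a \emph{fixed} $\rho > 1$ chosen so that $p\rho' < p^*$, and analogously on $\rand$ with $\mu > 1$ and $p\mu' < p_*$. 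By Theorem \ref{th1}, $\Vert u\Vert_{(p^*-p)\rho}$ and $\Vert u\Vert_{(p_*-p)\mu,\rand}$ are finite constants depending on the solution pair but not on $\kappa$. After absorbing the boundary contribution via Proposition \ref{proposition_boundary_integral} (with $\eps$ chosen to scale like a negative power of $\kappa$), I expect a Caccioppoli-type inequality
\begin{align*}
\Vert u u_h^{\kappa}\Vert_{1,p}^p \le D(\kappa)\,\bigl(\Vert u u_h^{\kappa}\Vert_{ps}^p + 1\bigr),
\end{align*}
with $s \in (1, p^*/p)$ fixed and $D(\kappa) \le C(1+\kappa)^{a}$ for some $a \ge 0$.

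Applying the Sobolev embedding $W^{1,p}(\Omega) \hookrightarrow L^{p^*}(\Omega)$ on the left-hand side and letting $h \to \infty$ via Fatou's lemma yields
\begin{align*}
\Vert u\Vert_{(\kappa+1)p^*} \le D(\kappa)^{\frac{1}{p(\kappa+1)}}\bigl(\Vert u\Vert_{(\kappa+1)ps}^{p(\kappa+1)} + 1\bigr)^{\frac{1}{p(\kappa+1)}}.
\end{align*}
Setting $\kappa_{n+1}+1 := (p^*/(ps))(\kappa_n+1)$ starting from some $\kappa_0 > 0$ gives $\kappa_n+1 = (p^*/(ps))^n(\kappa_0+1) \to \infty$. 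Since $D(\kappa_n)$ grows only polynomially in $\kappa_n$ while $\kappa_n+1$ grows geometrically, the series $\sum_n \log D(\kappa_n)/(\kappa_n+1)$ converges, and iteration produces $\sup_n \Vert u\Vert_{(\kappa_n+1)p^*} \le M < \infty$. Proposition \ref{proposition_final_interation} then yields $u \in L^\infty(\Omega)$, Proposition \ref{proposition_boundedness_boundary} upgrades this to $u \in L^\infty(\rand)$, and running the identical argument on the second line of \eqref{weak} with test function $\psi = v\,v_h^{\kappa q}$ (and exponents $q, q^*, q_*$) yields $v \in L^\infty(\close)$.

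The main obstacle is the careful bookkeeping required to guarantee that $D(\kappa)$ truly depends on $\kappa$ only polynomially. One must track the $(\kappa p+1)/(\kappa+1)^p$ factors from Theorem \ref{th1}, choose the $\eps$ in Proposition \ref{proposition_boundary_integral} as a suitable negative power of $\kappa$, and verify that none of the Young-inequality splittings used to control the gradient terms $B_4\vert\nabla u\vert^{b_5}$, $B_6\vert\nabla u\vert^{b_7}\vert\nabla v\vert^{b_8}$ (and their boundary analogues) introduces a super-polynomial dependence on $\kappa$. Once this polynomial control is in hand, the Moser scheme closes exactly as sketched.
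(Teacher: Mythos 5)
Your proposal follows essentially the same route as the paper's proof: it reuses the Caccioppoli-type inequality from Theorem \ref{th1}, replaces the level-set splitting by plain H\"older estimates with fixed subcritical exponents (made possible by the $L^r(\close)$-bounds of Theorem \ref{th1}), absorbs the boundary term via Proposition \ref{proposition_boundary_integral}, tracks the now merely polynomial $\kappa$-dependence of the constants, and closes the geometric Moser iteration via Propositions \ref{proposition_final_interation} and \ref{proposition_boundedness_boundary}. The only detail you gloss over is how the additive ``$+1$'' is absorbed along the iteration, which the paper settles by the dichotomy $\Vert u^{\kappa+1}\Vert_{p_1}^p\le 1$ along a sequence versus $>1$ for all large $\kappa$ (together with the elementary bound $((\kappa+1)^{M})^{1/(\kappa+1)}\le C^{1/\sqrt{\kappa+1}}$); this is a standard step and does not affect the correctness of your argument.
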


\begin{proof}
    Let $ (u,v) \in W^{1,p}(\Omega) \times W^{1,q}(\Omega)$ be a weak solution of problem \eqref{problem}. As in the proof of Theorem \ref{th1} we will suppose  that $ u, v \ge 0$ and we only prove that $ u \in L^{\infty}(\close)$, since the proof that $ v \in L^{\infty}(\close)$ works in a similar way. We repeat the proof of Theorem \ref{th1} until inequality \eqref{dis}, that is
    \begin{align}\label{dis15}
	\begin{split}
	    &\frac{\kappa p+1}{(\kappa+1)^p} \Vert u u_h^{\kappa} \Vert_{1,p}^p\\
	    & \le M_{27} \l(\frac{\kappa p+1}{(\kappa+1)^p}+ 1 \r) \Vert u u_h^{\kappa} \Vert_{ps}^p+ M_{28}(\kappa p+1) \into u^{p^*} u_h^{\kappa p} \,dx \\
	    & \qquad + M_{29} \int_{\rand} u^{p_*} u_h^{\kappa p} \,d\sigma+ M_{30} \Vert u u_h^{\kappa} \Vert_{p t, \rand}^p+ M_{31}(\kappa+1).
	\end{split}
    \end{align}
    Recall that $ ps< p^*$ and $ pt< p_*$. Hence, we can fix numbers $ p_1 \in (ps, p^*)$ and $ p_2 \in (p t, p_*)$. Then, by H\"older's inequality and the $L^r(\close)$-boundedness of $u$ for any finite $r$, see Theorem \ref{th1}, we have for the terms on the right-hand side of \eqref{dis15} the following
    \begin{align}\label{right}
	\begin{split}
	    \Vert u u_h^{\kappa} \Vert_{ps}^p 
	    & \le \vert \Omega \vert^{\frac{p_1- ps}{ p_1s}} \l( \into (u u_h^{\kappa})^{p_1} \,dx \r)^{\frac{p}{p_1}} 
	    \le M_{32} \Vert u u_h^{\kappa} \Vert_{p_1}^p, \\
	    \into u^{p^*} u_h^{\kappa p} \,dx &= \into u^{p^*-p} (u u_h^{\kappa})^p \,dx \\
	    & \le \l(\into u^{\frac{p^*-p}{p_1-p} p_1} \,dx \r)^{\frac{p_1-p}{p_1}} \l(\into (u u_h^{\kappa})^{p_1} \,dx \r)^{\frac{p}{p_1}}\\ 
	    & \le M_{33} \Vert u u_h^{\kappa} \Vert_{p_1}^p, \\
	    \int_{\rand} u^{p_*} u_h^{\kappa p} \,d\sigma &= \int_{\rand} u^{p_*-p} (u u_h^{\kappa})^p \,d\sigma \\
	    & \le \l(\int_{\rand} u^{\frac{p_*-p}{p_2-p} p_2} \,d\sigma \r)^{\frac{p_2-p}{p_2}} \l(\int_{\rand} (u u_h^{\kappa})^{p_2} \,d\sigma \r)^{\frac{p}{p_2}}\\ &\le M_{34} \Vert u u_h^{\kappa} \Vert_{p_2, \rand}^p, \\
	    \Vert u u_h^{\kappa} \Vert_{p t, \rand}^p & \le \vert \rand \vert^{\frac{p_2-p t}{p_2 }} \l(\int_{\rand} (u u_h^{\kappa})^{p_2} \,d\sigma \r)^{\frac{p}{p_2}} \le M_{35} \Vert u u_h^{\kappa} \Vert_{p_2, \rand}^p.
	\end{split}
    \end{align}
    Observe that $ M_{33}, M_{34} $ are finite thanks to Theorem \ref{th1}. More precisely, they are such that
    \begin{align*}
	M_{33}= M_{33} \l(\Vert u \Vert_{\frac{p^*- p}{p_1- p} p_1} \r), \quad M_{34}= M_{34} \l(\Vert u \Vert_{\frac{p_*-p}{p_2- p} p_2, \rand} \r).
    \end{align*}
    Then \eqref{dis15} becomes
    \begin{align}\label{16}
	\begin{split}
	    \frac{\kappa p+ 1}{(\kappa+1)^p} \Vert u u_h^{\kappa} \Vert_{1,p}^p 
	    &\le M_{36} \l(\frac{\kappa p+1}{(\kappa+1)^p}+ \kappa p+ 2\r) \Vert u u_h^{\kappa} \Vert_{p_1}^p\\
	    &\qquad + M_{37} \Vert u u_h^{\kappa} \Vert_{p_2, \rand}^p+ M_{31} (\kappa+1),
	\end{split}
    \end{align}
    where we used the estimates in \eqref{right}. Now we are going to apply again Proposition \ref{proposition_boundary_integral} to the boundary term. This gives, after using H\"older's inequality,
    \begin{align}\label{bdry}
	\begin{split}
	    \Vert u u_h^{\kappa} \Vert_{p_2, \rand}^p 
	    & \le \eps_2 \Vert u u_h^{\kappa} \Vert_{1,p}^p+ \bar c_1 \eps_2^{-\bar c_2} \Vert u u_h^{\kappa} \Vert_p^p \\
	    & \le \eps_2 \Vert u u_h^{\kappa} \Vert_{1,p}^p+ \bar c_1 \eps_2^{-\bar c_2} M_{38} \Vert u u_h^{\kappa} \Vert_{p_1}^p.
	\end{split}
    \end{align}
    Choosing $ \eps_2 $ such that $ M_{37} \eps_2= \frac{\kappa p+1}{2(\kappa+1)^p} $ and applying \eqref{bdry} to \eqref{16} yields
    \begin{align}\label{17}
	\frac{\kappa p+1}{2(\kappa+1)^p} \Vert u u_h^{\kappa} \Vert_{1,p}^p \le \l[M_{39}(\kappa p+ 2)+ M_{40} \bar c_1 \eps_2^{-\bar c_2} \r] \Vert u u_h^{\kappa} \Vert_{p_1}^p+ M_{31} (\kappa+1).
    \end{align}
    Inequality \eqref{17} can be written in the form
    \begin{align*}
	\Vert u u_h^{\kappa} \Vert_{1,p}^p \le M_{41}((\kappa+1)^p)^{M_{42}} \l[\Vert u u_h^{\kappa} \Vert_{p_1}^p+ 1 \r].
    \end{align*}
    By the Sobolev embedding and the $\Lp{r}$-boundedness of $u$ we obtain
    \begin{align}\label{19}
	\begin{split}
	    \Vert u u_h^{\kappa} \Vert_{p^*} 
	    & \le c_{\Omega} \Vert u u_h^{\kappa} \Vert_{1,p} \le M_{43} (\kappa+1)^{M_{44}} \l[\Vert u u_h^{\kappa} \Vert_{p_1}^p+ 1 \r]^{\frac{1}{p}} \\
	    & \le M_{43} (\kappa+1)^{M_{44}} \l[\Vert u^{\kappa+1} \Vert_{p_1}^p+ 1 \r]^{\frac{1}{p}}< \infty.
	\end{split}
    \end{align}
    Applying Fatou's Lemma to \eqref{19} then gives
    \begin{align}\label{20}
	\Vert u \Vert_{(\kappa+1)p^*}= \Vert u^{\kappa+ 1} \Vert_{p^*}^{\frac{1}{\kappa+1}} \le M_{43}^{\frac{1}{\kappa+1}}((\kappa+1)^{M_{44}})^{\frac{1}{\kappa+1}} \l[\Vert u^{\kappa+1} \Vert_{p_1}^p+ 1 \r]^{\frac{1}{(\kappa+1)p}}.
    \end{align}
    Since
    \begin{align*}
	((\kappa+1)^{M_{44}})^{\frac{1}{\sqrt{\kappa+1}}} \ge 1 \quad \text{and} \quad \lim_{\kappa \to \infty} ((\kappa+1)^{M_{44}})^{\frac{1}{\sqrt{\kappa+1}}}= 1,
    \end{align*}
    there exists $ M_{45}> 1 $ such that
    \begin{align}\label{21}
	((\kappa+1)^{M_{44}})^{\frac{1}{\kappa+1}} \le M_{45}^{\frac{1}{\sqrt{\kappa+1}}}.
    \end{align}
    From \eqref{20}, taking \eqref{21} into account, we have
    \begin{align}\label{22}
    \begin{split}
	\Vert u \Vert_{(\kappa+1) p^*}
	& \le M_{43}^{\frac{1}{\kappa+1}} M_{45}^{\frac{1}{\sqrt{\kappa+1}}} \l[\Vert u^{\kappa+1} \Vert_{p_1}^p+ 1 \r]^{\frac{1}{(\kappa+1)p}}.
	\end{split}
    \end{align}  
    Suppose now there exists a sequence $ \kappa_n \to \infty$ such that
    \begin{align*}
	\Vert u^{\kappa_n+1} \Vert_{p_1}^p \le 1,
    \end{align*}
    that is
    \begin{align*}
	\Vert u \Vert_{(\kappa_n+1) p_1} \le 1.
    \end{align*}
    Then, Proposition \ref{proposition_final_interation} implies that $ \Vert u \Vert_{\infty}< \infty$. 
    On the contrary, suppose that there exists $ \kappa_0> 0 $ such that
    \begin{align*}
	\Vert u^{\kappa+1} \Vert_{p_1}^p> 1 
	\quad \text{for every } \kappa \ge \kappa_0.
    \end{align*}
    Then, \eqref{22} becomes
    \begin{align*}
	\Vert u \Vert_{(\kappa+1) p^*} \le M_{43}^{\frac{1}{\kappa+1}} M_{45}^{\frac{1}{\sqrt{\kappa+1}}} \l[2 \Vert u^{\kappa+1} \Vert_{p_1}^p \r]^{\frac{1}{(\kappa+1)p}} \le M_{46}^{\frac{1}{\kappa+1}} M_{45}^{\frac{1}{\sqrt{\kappa+1}}}\Vert u \Vert_{(\kappa+1) p_1}
    \end{align*}
    for every $ \kappa \ge \kappa_0$. 
    
    Now we choose $\kappa$ in the following way
    \begin{align*}
	\begin{split}
	    & \kappa_1: (\kappa_1+1) p_1= (\kappa_0+1) p^*, \\
	    & \kappa_2: (\kappa_2+1)p_1 = (\kappa_1+1) p^*,\\
	    & \kappa_3: (\kappa_3+1)p_1 = (\kappa_2+1) p^*,\\
	    & \qquad \vdots \qquad \qquad \qquad \qquad \vdots \quad \, .
	\end{split}
    \end{align*}
    This leads to
    \begin{align*}
	\Vert u \Vert_{(\kappa_n+1) p^*} \le M_{46}^{\frac{1}{\kappa_n+1}}M_{45}^{\frac{1}{\sqrt{\kappa_n+1}}} \Vert u \Vert_{(\kappa_{n-1}+1)p^*}
    \end{align*}
    for every $ n \in \N$ with  $ (\kappa_n)$ given by $ (\kappa_n+1)= (\kappa_0+1) \l(\frac{p^*}{p_1}\r)^n$. It follows
    \begin{align*}
	\Vert u \Vert_{(\kappa_n+1)p^*} \le M_{46}^{\sum\limits_{i=1}^n \frac{1}{\kappa_i+1}}  M_{45}^{\sum\limits_{i=1}^n \frac{1}{\sqrt{\kappa_i+1}}} \Vert u \Vert_{(\kappa_0+1)p^*}.
    \end{align*}
Since
    \begin{align*}
	\frac{1}{\kappa_i+1}= \frac{1}{\kappa_0+1} \l(\frac{p_1}{p^*}\r)^i \quad \text{and} \quad  \frac{p_1}{p^*}< 1,
    \end{align*}
    there exists $M_{47}> 0 $ such that
    \begin{align*}
	\Vert u \Vert_{(\kappa_n+1)p^*} \le M_{47} \Vert u \Vert_{(\kappa_0+1) p^*}< \infty,
    \end{align*}
    where the right-hand side is finite thanks to Theorem \ref{th1}. Now we may apply again Proposition \ref{proposition_final_interation}. This ensures that $ u \in L^{\infty}(\Omega)$. Moreover, Proposition \ref{proposition_boundedness_boundary} gives $ u \in L^{\infty}(\rand)$ and so, $ u \in L^{\infty}(\close)$. 
\end{proof}

\begin{remark}\label{remark_well_definitedness}
    The conditions on the exponents in hypotheses (H) are not the natural ones. Precisely, in order to have a well-defined weak solution it is enough to require the following assumptions
    \begin{align*}\hspace*{-0.1cm}
     \begin{array}[]{lll}
		        \text{(E1)} \quad  r_1 \le p^*
                & \text{(E2)} \quad  r_2 \le q^*
                &\\[3ex]
		\text{(E3)} \quad b_1 \le p^*-1
                & \text{(E4')} \quad \displaystyle b_2 \leq \frac{q^*}{p^*}(p^*-1)
                & \text{(E5')} \quad  \displaystyle\frac{b_3}{p^*}+ \frac{b_4}{q^*}\leq \frac{p^*-1}{p^*}\\[3ex]
                \text{(E6)} \quad b_5 \le p-1
                & \text{(E7')} \quad \displaystyle b_6 \leq \frac{q}{p^*}(p^*-1)
                & \text{(E8')} \quad\displaystyle \frac{b_7}{p}+ \frac{b_8}{q}\leq \frac{p^*-1}{p^*}\\[3ex]
                \text{(E9')}\displaystyle \quad \tilde b_1 \leq \frac{p^*}{q^*}(q^*-1)
                & \text{(E10)} \quad \displaystyle \tilde b_2 \le q^*-1
                & \text{(E11')} \quad\displaystyle\frac{\tilde b_3}{p^*}+ \frac{\tilde b_4}{q^*}\leq \frac{q^*-1}{q^*}\\[3ex]
                \text{(E12')} \displaystyle\quad  \tilde b_5 \leq \frac{p}{q^*}(q^*-1)
                & \text{(E13)} \quad \displaystyle\tilde b_6 \le q- 1
                & \text{(E14')} \quad\displaystyle\frac{\tilde b_7}{p}+ \frac{\tilde b_8}{q}\leq \frac{q^*-1}{q^*}\\[3ex]
                 \text{(E15)} \quad c_1  \le p_*-1
                & \text{(E16')} \quad \displaystyle c_2 \leq \frac{q_*}{p_*}(p_*-1)
                & \text{(E17')} \quad\displaystyle\frac{c_3}{p_*}+ \frac{c_4}{q_*}\leq \frac{p_*-1}{p_*}\\[3ex]
                \text{(E18')}\displaystyle \quad  \tilde c_1 \leq \frac{p_*}{q_*}(q_*-1)
                & \text{(E19)} \quad \displaystyle\tilde c_6 \le q_*-1
                & \text{(E20')} \quad\displaystyle\frac{\tilde c_3}{p_*}+ \frac{\tilde c_4}{q_*}\leq \frac{q_*-1}{q_*}.
        \end{array}
    \end{align*}
    In order to apply Moser's iteration we needed to strengthen the hypotheses for (E4'), (E5'), (E7'), (E8'), (E9'), (E11'), (E12'), (E14'), (E16'), (E17'), (E18'), and (E20'). We also point out that hypotheses (H1) and (H2) are not explicity needed in the proofs of Theorems \ref{th1} and \ref{th2}, but they are necessary to have a well-defined  weak solution as defined in \eqref{weak}.
    
    Furthermore, the bounds obtained in Theorem \ref{th1} and \ref{th2} depend on the data in hypotheses (H) and also on the solution pair $(u,v)$. In particular, the bound for $u$ also depends on $v$ and vice-versa.
\end{remark}

In the last part we want to mention that the results obtained in Theorems \ref{th1} and \ref{th2} can be easily applied to problems of the form \eqref{problem} with a homogeneous Dirichlet condition. Indeed, consider the problem
\begin{equation}\label{problem3}
    \begin{aligned}
	-\divergenz \mathcal A_1(x, u, \nabla u)
	&= \mathcal B_1(x, u, v, \nabla u, \nabla v) \qquad 
	&& \text{in } \Omega, \\
	-\divergenz \mathcal A_2(x, v, \nabla v)
	&= \mathcal B_2(x, u, v, \nabla u, \nabla v) \qquad 
	&& \text{in } \Omega, \\
	u=v
	&= 0 
	&& \text{on } \rand.
    \end{aligned}
\end{equation}
We suppose the following assumptions on the data in problem \eqref{problem3}.

\begin{enumerate}[leftmargin=0.8cm]
 \item[($\tilde{\text{H}}$)] 
    The functions $ \mathcal A_i\colon \Omega \times \R   \times \R^N  \to \R^N$ and $\mathcal B_i\colon \Omega \times \R \times \R \times \R^N \times \R^N \to \R$, $i= 1,2 $, are Carath\'eodory functions such that
    \begin{align*}
        \text{($\tilde{\text{H}}$1}) \quad  
	    & \vert \mathcal A_1(x, s, \xi) \vert \le A_1 \vert \xi \vert^{p-1}+  A_2 \vert s \vert^{r_1 \frac{p-1}{p}}+ A_3,  \\
        \text{($\tilde{\text{H}}$2}) \quad 
	    & \vert \mathcal A_2(x, t, \zeta) \vert \le  \tilde A_1 \vert \zeta \vert^{q-1}+ \tilde A_2  \vert t \vert^{r_2 \frac{q-1}{q}}+ \tilde A_3,  \\
        \text{($\tilde{\text{H}}$3}) \quad 
	    & \mathcal A_1(x, s, \xi) \cdot \xi \ge A_4 \vert \xi \vert^p- A_5 \vert s \vert^{r_1}- A_6,   \\
        \text{($\tilde{\text{H}}$4}) \quad 
	    & \mathcal A_2(x, t, \zeta) \cdot \zeta \ge  \tilde A_4 \vert \zeta \vert^q-  \tilde A_5 \vert t \vert^{r_2}- \tilde A_6,  \\
        \text{($\tilde{\text{H}}$5}) \quad 
	    & \vert \mathcal B_1(x, s, t, \xi, \zeta) \vert \le B_1 \vert s \vert^{b_1}+ B_2 \vert t \vert^{b_2}+ B_3 \vert s \vert^{b_3} \vert t \vert^{b_4}+ B_4 \vert \xi \vert^{b_5}\\
	    & \hspace*{3cm}+ B_5 \vert \zeta \vert^{b_6}+ B_6 \vert \xi \vert^{b_7} \vert \zeta \vert^{b_8}+ B_7,  \\
        \text{($\tilde{\text{H}}$6}) \quad 
	    & \vert \mathcal B_2(x, s, t, \xi, \zeta) \vert \le  \tilde B_1 \vert s \vert^{\tilde b_1}+ \tilde B_2 \vert t \vert^{\tilde b_2}+ \tilde B_3  \vert s \vert^{\tilde b_3} \vert t \vert^{\tilde b_4}+ \tilde B_4  \vert \xi \vert^{\tilde b_5}\\
	    & \hspace*{3cm}
	    + \tilde B_5  \vert \zeta \vert^{\tilde b_6}+ \tilde B_6  \vert \xi \vert^{\tilde b_7} \vert \zeta \vert^{\tilde b_8}+ \tilde B_7, 
\end{align*}
for a.e.\,$x \in \Omega$, for all $ s, t \in \R $, and for all $ \xi, \zeta \in \R^N $, with nonnegative constants $ A_i, \tilde A_i, B_j, \tilde B_j\, (i \in \{1, \dots, 6\}, j \in \{1, \dots, 7\}) $ and with $ 1< p, q< \infty$. Moreover, the exponents $ b_i, \tilde b_i,  r_1, r_2  \, (i \in \{1, \dots, 8\} )$ are nonnegative and satisfy the following assumptions
\begin{align*}
      \hspace*{0.5cm}\begin{array}[]{lll}
		        \text{(E1)} \quad  r_1 \le p^*
                & \text{(E2)} \quad  r_2 \le q^*
                &\\[3ex]
		\text{(E3)} \quad b_1 \le p^*-1
                & \text{(E4)} \quad \displaystyle b_2 < \frac{q^*}{p^*}(p^*-p)
                & \text{(E5)} \quad  \displaystyle\frac{b_3}{p^*}+ \frac{b_4}{q^*}< \frac{p^*-p}{p^*}\\[3ex]
                \text{(E6)} \quad b_5 \le p-1
                & \text{(E7)} \quad \displaystyle b_6 < \frac{q}{p^*}(p^*-p)
                & \text{(E8)} \quad\displaystyle \frac{b_7}{p}+ \frac{b_8}{q}< \frac{p^*-p}{p^*}\\[3ex]
                \text{(E9)}\displaystyle \quad \tilde b_1 < \frac{p^*}{q^*}(q^*-q)
                & \text{(E10)} \quad \displaystyle \tilde b_2 \le q^*-1
                & \text{(E11)} \quad\displaystyle\frac{\tilde b_3}{p^*}+ \frac{\tilde b_4}{q^*}< \frac{q^*-q}{q^*}\\[3ex]
                \text{(E12)} \displaystyle\quad  \tilde b_5 < \frac{p}{q^*}(q^*-q)
                & \text{(E13)} \quad \displaystyle\tilde b_6 \le q- 1
                & \text{(E14)} \quad\displaystyle\frac{\tilde b_7}{p}+ \frac{\tilde b_8}{q}< \frac{q^*-q}{q^*},
        \end{array}
\end{align*}
where the numbers $ p^*, p_*, q^*, q_*$ are defined by \eqref{critical1} and \eqref{critical}. 
\end{enumerate}

A couple $ (u,v) \in W^{1,p}_0(\Omega) \times W^{1,q}_0(\Omega) $ is said to be a weak solution of problem \eqref{problem3} if
        \begin{align*}
            \begin{split}
                    \into \mathcal A_1(x, u, \nabla u) \cdot \nabla \varphi \,dx&= \into \mathcal B_1(x, u, v, \nabla u, \nabla v) \varphi \,dx\\
                    \into \mathcal A_2(x, v, \nabla v) \cdot \nabla \psi \,dx&= \into \mathcal B_2(x, u, v, \nabla u, \nabla v) \psi \,dx
            \end{split}
        \end{align*}
holds for all $ (\varphi, \psi) \in W^{1,p}_0(\Omega) \times W^{1,q}_0(\Omega) $.

We can state the following result for problem \eqref{problem3}.

\begin{theorem} \label{th3}
    Let $ \Omega \subset \R^N, N> 1$, be a bounded domain with Lipschitz boundary $ \rand $ and let hypotheses $(\tilde{H})$ be satisfied. Then, every weak solution $ (u, v) \in W^{1,p}_0(\Omega) \times W^{1,q}_0(\Omega) $ of problem \eqref{problem3} belongs to $ L^{\infty}(\close) \times L^{\infty}(\close)$.
\end{theorem}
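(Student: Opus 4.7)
The plan is to reuse essentially verbatim the Moser iteration machinery developed in the proofs of Theorem \ref{th1} and Theorem \ref{th2}, exploiting the fact that the homogeneous Dirichlet condition eliminates every boundary integral that appeared there. First I would assume without loss of generality that $u,v\ge 0$ (treating $u^{\pm},v^{\pm}$ separately, which is legitimate since $u\in W^{1,p}_0(\Omega)$ implies $u^{\pm}\in W^{1,p}_0(\Omega)$ by \eqref{pm}). For each $h\ge 0$ and $\kappa>0$, the test function $\varphi=u u_h^{\kappa p}$ with $u_h=\min\{u,h\}$ is bounded and still vanishes on $\rand$ in the trace sense, hence it is an admissible element of $W^{1,p}_0(\Omega)$; the analogous choice $\psi=v v_h^{\kappa q}\in W^{1,q}_0(\Omega)$ works for the second equation.

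Inserting this $\varphi$ in the Dirichlet weak formulation yields exactly the identity \eqref{1} of the proof of Theorem \ref{th1}, except that the boundary term $\int_{\rand}\mathcal C_1(x,u,v)\,u u_h^{\kappa p}\,d\sigma$ is absent. Consequently, after applying $(\tilde{\text{H}}3)$ to the two terms on the left and $(\tilde{\text{H}}5)$ together with the same sequence of H\"older and Young inequalities used to control each of the seven summands on the right, I arrive at a version of inequality \eqref{dis} in which every occurrence of the boundary norms $\Vert u u_h^\kappa\Vert_{pt,\rand}$ and of $\int_{\rand}u^{p_*}u_h^{\kappa p}\,d\sigma$ is simply deleted. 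In particular, the application of Proposition \ref{proposition_boundary_integral} that was needed to reabsorb the boundary term is no longer required, which makes the derivation of the analogue of \eqref{dis4} strictly easier: one directly obtains
\begin{align*}
\Vert u u_h^{\kappa}\Vert_{1,p}^p \le M(\kappa,u,v)\bigl[\Vert u u_h^{\kappa}\Vert_{ps}^p+1\bigr]
\end{align*}
with $s\in(1,p^*/p)$ as in \eqref{s}, and symmetrically for $v$.

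From this point the proof of Theorem \ref{th1} is copied line by line: the Sobolev embedding $W^{1,p}(\Omega)\hookrightarrow L^{p^*}(\Omega)$ upgrades the bound, Fatou's lemma passes to the limit $h\to\infty$, and the bootstrap $\kappa_{n+1}+1=(p^*/(ps))(\kappa_n+1)$ drives $\kappa_n\to\infty$, giving $u\in L^r(\Omega)$ for every finite $r$. Inserting this $L^r$-information back into the inequality and running the second bootstrap exactly as in Theorem \ref{th2}, with the roles of $p_1\in(ps,p^*)$ unchanged and every term $\Vert u u_h^{\kappa}\Vert_{p_2,\rand}$ suppressed, produces the estimate $\Vert u\Vert_{(\kappa_n+1)p^*}\le M\Vert u\Vert_{(\kappa_0+1)p^*}$ uniformly in $n$, whence Proposition \ref{proposition_final_interation} yields $u\in L^\infty(\Omega)$. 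The same argument, starting with the second equation of \eqref{problem3} and hypotheses $(\tilde{\text{H}}4),(\tilde{\text{H}}6)$ plus (E9)--(E14), gives $v\in L^\infty(\Omega)$.

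The only remaining point is the boundary part of $L^\infty(\close)$. Since $(u,v)\in W^{1,p}_0(\Omega)\times W^{1,q}_0(\Omega)$, the traces of $u$ and $v$ vanish $\sigma$-a.e.\ on $\rand$, so $\Vert u\Vert_{\infty,\rand}=\Vert v\Vert_{\infty,\rand}=0$ and the conclusion $(u,v)\in L^\infty(\close)\times L^\infty(\close)$ is automatic; if one prefers, Proposition \ref{proposition_boundedness_boundary} may be invoked directly. I expect no genuine obstacle here: the main (and only mildly delicate) point is checking that the test function $u u_h^{\kappa p}$ indeed lies in $W^{1,p}_0(\Omega)$ and that every estimate of Theorem \ref{th1}--\ref{th2} involving the boundary either disappears or becomes trivial, which is precisely what happens under $(\tilde{\text{H}})$.
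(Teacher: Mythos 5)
Your proposal is correct and takes exactly the route the paper intends: the paper proves Theorem \ref{th3} by simply noting that the arguments of Theorems \ref{th1} and \ref{th2} apply verbatim, which is what you carry out, observing that $u u_h^{\kappa p}$ has zero trace and hence lies in $W^{1,p}_0(\Omega)$, that all boundary integrals (and thus the use of Proposition \ref{proposition_boundary_integral}) disappear, and that the boundary part of the $L^\infty(\close)$-claim is immediate from the vanishing trace or Proposition \ref{proposition_boundedness_boundary}.
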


The proof of Theorem \ref{th3} works exactly in the same way as the proofs of Theorems \ref{th1} and \ref{th2}.

\section*{Acknowledgment}

The authors wish to thank the two anonymous referees for their corrections and useful remarks.

The first author thanks the University of Technology Berlin (Technische Universit\"{a}t Berlin) for the kind hospitality during a research stay in October 2018 and the second author thanks the University of Catania (which is the former university of the first author) for the kind hospitality during a research stay in March 2019.

\end{document}